\documentclass[11pt,letterpaper]{article}

\usepackage{amsmath,amssymb,amsfonts,amsthm,mathrsfs}
\usepackage{graphicx}
\usepackage[usenames,dvipsnames]{color}
\usepackage[margin=1in]{geometry}
\usepackage{hyperref}

\theoremstyle{plain}
\newtheorem{thm}{Theorem}
\newtheorem{lem}[thm]{Lemma}
\newtheorem{cor}[thm]{Corollary}
\newtheorem{prop}[thm]{Proposition}
\newtheorem{remark}[thm]{Remark}
\newtheorem{conj}[thm]{Conjecture}
\newtheorem{que}[thm]{Question}
\theoremstyle{definition}

\theoremstyle{remark}
\newtheorem{claim}{Claim}

\newcommand{\real}{\ensuremath {\mathbb R} }
\newcommand{\ent}{\ensuremath {\mathbb Z} }
\newcommand{\nat}{\ensuremath {\mathbb N} }

\newcommand{\mbf}[1] {\text{\boldmath$#1$}}
\newcommand{\remove}[1] {}

\newcommand{\ex} {{\bf E}}
\newcommand{\pr} {{\bf Pr}}
\newcommand{\var} {{\bf Var}}

\newcommand{\B} {\ensuremath{\mathbb B}}
\newcommand{\M} {\ensuremath{\mathbb M}}

\newcommand{\cB} {\ensuremath{\mathcal B}}
\newcommand{\cC} {\ensuremath{\mathcal C}}
\newcommand{\bcG} {\ensuremath{\overline{\mathcal G}}}

\newcommand{\cF} {\ensuremath{\mathcal F}}
\newcommand{\cG} {\ensuremath{\mathcal G}}
\newcommand{\chG} {\ensuremath{\widehat{\mathcal G}}}
\newcommand{\cR} {\ensuremath{\mathcal R}}
\newcommand{\cT} {\ensuremath{\mathcal T}}
\newcommand{\cZ} {\ensuremath{\mathcal Z}}

\newcommand{\hX} {\ensuremath{\widehat X}}

\newcommand{\sG} {\ensuremath{\mathscr G}}
\newcommand{\sL} {\ensuremath{\mathscr L}}
\newcommand{\sM} {\ensuremath{\mathscr M}}
\newcommand{\bX} {\ensuremath{\mbf X}}
\newcommand{\bhX} {\ensuremath{\widehat{\mbf X}}}
\newcommand{\eps}{\varepsilon}

\newcommand{\nn}{\ensuremath{\widetilde n}}
\newcommand{\p}{{\widehat p}}
\newcommand{\pplus}{{\p\,^+}}
\newcommand{\pminus}{{\p\,^-}}

\DeclareMathOperator{\Bin}{Bin}
\DeclareMathOperator{\diam}{diam}

\title{Strong-majority bootstrap percolation on regular graphs with low dissemination threshold}
\author{Dieter Mitsche\thanks{e-mail: \texttt{dmitsche@unice.fr}, Universit\'{e} de Nice Sophia-Antipolis, Laboratoire J-A Dieudonn\'{e}, Parc Valrose, 06108 Nice cedex 02, France.} 
\and Xavier P\'erez-Gim\'enez\thanks{e-mail: \texttt{xperez@ryerson.ca}, Department of Mathematics, Ryerson University, Toronto, ON, Canada.} 
\and Pawe{\l} Pra{\l}at\thanks{e-mail: \texttt{pralat@ryerson.ca}, Department of Mathematics, Ryerson University, Toronto, ON, Canada; research partially supported by NSERC and Ryerson University.}}
\date{}

\begin{document}
\maketitle
\begin{abstract}
Consider the following model of strong-majority bootstrap percolation on a graph. Let $r \ge 1$ be some integer, and $p \in [0,1]$. Initially, every vertex is active with probability $p$, independently from all other vertices. Then, at every step of the process, each vertex $v$ of degree $\deg(v)$ becomes active if at least $(\deg(v)+r)/2$ of its neighbours are active. Given any arbitrarily small $p > 0$ and any integer $r$, we construct a family of $d=d(p,r)$-regular graphs such that with high probability all vertices become active in the end. In particular, the case $r=1$ answers a question and disproves a conjecture of Rapaport, Suchan, Todinca, and Verstra\"{e}te~\cite{RSTV}.
\end{abstract}

\section{Introduction}
Given a graph $G=(V,E)$, a set $A \subseteq V$, and $j \in \nat$, the \emph{bootstrap percolation} process $\B_j(G; A)$ is defined as follows: initially, a vertex $v \in V$ is \emph{active} if $v \in A$, and \emph{inactive} otherwise. Then, at each round, each inactive vertex becomes active if it has at least $j$ active neighbours.
The process keeps going until it reaches a stationary state in which every inactive vertex has less than $j$ active neighbours. We call this the \emph{final state} of the process.
Note that we may slow down the process by delaying the activation of some vertices, but the final state is invariant.
If $G$ is a $d$-regular graph, then there is a natural characterization of the final state in terms of the $k$-core (i.e.,~the largest subgraph of minimum degree at least $k$): the set of inactive vertices in the final state of $\B_j(G; A)$ is precisely the vertex set of the $(d-j+1)$-core of the subgraph of $G$ induced by the initial set of inactive vertices $V\setminus A$ (see~e.g.~\cite{JansonElPr}). We say that $\B_j(G; A)$ \emph{disseminates} if all vertices are active in the final state. 

Define $\B_j(G; p)$ to be the same bootstrap percolation process, where the set of initially active vertices is chosen at random: each $v \in V$ is initially active with probability $p$, independently from all other vertices.
This process (which can be regarded as a type of cellular automaton on graphs) was introduced in 1979 by Chalupa,  Leath and Reich~\cite{CLR79} on the grid $\ent^m$ as a simple model of dynamics of ferromagnetism, and has been widely studied ever since on many families of deterministic or random graphs.
The following obvious monotonicity properties hold: for any $A' \subseteq A'' \subseteq V$, if $\B_j(G; A')$ disseminates, then $\B_j(G; A'')$ disseminates as well; similarly, if $i \le j$ and $\B_j(G; A)$ disseminates, then $\B_i(G; A)$ must also disseminate. Therefore, the probability that $\B_j(G; p)$ disseminates is non-increasing in $j$ and non-decreasing in $p$.
In view of this, one may expect that, for some sequences  of graphs $G_n$,
there may be a sharp probability threshold $\p_n$ such that: for every constant $\eps>0$, a.a.s.\footnote{We say that a sequence of events $H_n$ holds \emph{asymptotically almost surely} (a.a.s.) if $\lim_{n\to\infty}\pr(H_n)=1$.} $\B_j(G_n; p_n)$ disseminates, if $p_n \ge (1+\eps) \p_n$; and a.a.s.\ it does not disseminate, if $p_n\le(1-\eps)\p_n$. If such a value $\p_n$ exists, we call it a \emph{dissemination threshold} of $\B_j(G_n; p_n)$.
Moreover, if $\lim_{n\to\infty} \p_n = \p \in[0,1]$ exists, we call this limit $\p$ the \emph{critical probability} for dissemination, which is \emph{non-trivial} if $0<\p<1$.
A lot of work has been done to establish dissemination thresholds or related properties of this process for different graph classes.
In particular, denoting $\{1,2,\ldots,n\}$ by $[n]$, the case of $G$ being the $m$-dimensional grid $[n]^m$ has been extensively studied: starting with the work of Holroyd~\cite{Holroyd} analyzing the $2$-dimensional grid, the results then culminated in~\cite{BBDM}, where Balogh et al.\ gave precise and sharp thresholds for the dissemination of $\B_j([n]^m; p)$ for any constant dimension $m\ge2$ and every $2\le j\le m$.
Other graph classes that have been studied are trees, hypercubes and hyperbolic lattices (see~e.g.~\cite{BPP, BB06, BBM, STBT}).

In the context of random graphs, Janson et al.~\cite{JLTV} considered the model $\B_j(G; A)$ with $j \geq 2$, $G=\sG(n,p)$\footnote{$\sG(n,p)$ is the probability space consisting of all graphs on $n$ vertices with vertex set $[n]$, and with each pair of vertices being connected by an edge with probability $p$, independently of all others.} and $A$ being a set of vertices chosen at random from all sets of size $a(n)$.
They showed a sharp threshold with respect to the parameter $a(n)$ that separates two regimes in which
the final set of active vertices has a.a.s.\  size $o(n)$ or $n-o(n)$ (i.e.~`almost' dissemination), respectively.
Moreover, there is full dissemination in the supercritical regime provided that $\sG(n,p)$ has minimum degree at least $j$. 
Balogh and Pittel~\cite{BP} analysed the bootstrap percolation process on random $d$-regular graphs, and established non-trivial critical probabilities for dissemination for all $2\le j\le d-1$, and Amini~\cite{Am10} considered random graphs with more general degree sequences. Finally, extensions to inhomogeneous random graphs were considered by Amini, Fountoulakis and Panagiotou in~\cite{AFP}.

Aside from its mathematical interest, bootstrap percolation was extensively studied by physicists: it was used to describe complex phenomena in jamming transitions~\cite{Toninelli}, magnetic systems~\cite{Sabha} and neuronal activity~\cite{Tlusty}, and also in the context of stochastic Ising models~\cite{Fontes}. For more applications of bootstrap percolation, see the survey~\cite{AlderLev} and the references therein.

\paragraph{Strong-majority model.}
In this paper, we introduce a natural variant of the bootstrap percolation process.
Given a graph $G=(V,E)$, an initially active set $A \subseteq V$, and $r\in\ent$, the {\em $r$-majority bootstrap percolation} process $\M_r(G;A)$ is defined as follows: starting with an initial set of active vertices $A$, at each round, each inactive vertex becomes active if the number of its active neighbours minus the number of its inactive neighbours is at least $r$. In other words, the activation rule for an inactive vertex $v$ of degree $\deg(v)$ is that $v$ has at least $\lceil(\deg(v)+r)/2 \rceil$ active neighbours.
As in ordinary bootstrap percolation, we are mainly interested in characterising the set of inactive vertices in the final state of and determining whether it is empty (i.e.~the process disseminates) or not.
Note that for a $d$-regular graph $G$, $\M_r(G;A)$ is exactly the same process as $\B_{\lceil (d+r)/2 \rceil}(G;A)$, and therefore the final set of inactive vertices of $\M_r(G; A)$ is precisely the vertex set of the
$\lfloor  (d - r)/2 + 1 \rfloor$-core of the graph induced by the initial set of inactive vertices. If $G$ is not regular, the two models are not comparable.
The process $\M_r(G;p)$ is defined analogously for a random initial set $A$ of active vertices, where each vertex belongs to $A$ (i.e.~is initially active) with probability $p$ and  independently of all other vertices.
Note that $\M_r(G;A)$ and $\M_r(G;p)$ satisfy the same monotonicity properties with respect to $A$, to $r$, and to $p$ that we described above for ordinary bootstrap percolation, and thus we define the \emph{critical probability} $\p$ for dissemination (if it exists) analogously as before. Additionally, for any (random or deterministic) sequence of graphs $G_n$, define
\begin{align*}
\pplus &= \inf\{p\in[0,1] : \text{a.a.s.\ $\M_r(G_n;p)$ disseminates}\}
\qquad\text{and}\\
\pminus &= \sup\{p\in[0,1] : \text{a.a.s.\ $\M_r(G_n;p)$ does not disseminate}\}.
\end{align*}
Trivially,  $0\le\pminus\le\pplus\le1$; and, in case of equality, the critical probability $\p$ must exist and satisfy $\p=\pminus=\pplus$.
The $r$-majority bootstrap percolation process is a generalisation of the \emph{non-strict majority} and \emph{strict majority} bootstrap percolation models, which correspond to the cases $r=0$ and $r=1$, respectively.
The study of these two particular cases has received a lot of attention recently.
For instance, Balogh, Bollob\'{a}s and Morris~\cite{BBM} obtained the critical probability $\p=1/2$ for the non-strict majority bootstrap percolation process $\M_0(G;p)$ on the hypercube $[2]^n$, and extended their results to the $m$-dimensional grid $[n]^m$ for $m \ge (\log\log n)^2 (\log\log\log n)$. Also, Stef\'ansson and Vallier~\cite{SV15} studied the non-strict majority model for the random graph $\sG(n,p)$. (Note that, since $\sG(n,p)$ is not a regular graph, this process cannot be formulated in terms of ordinary bootstrap percolation).
For the strict majority case, we first state a consequence of the work of Balogh and Pittel~\cite{BP} on random $d$-regular graphs mentioned earlier.
Let $\sG_{n,d}$ denote a graph chosen uniformly at random (u.a.r.\ for short) from the set of all $d$-regular graphs on $n$ vertices (note that $n$ is even if $d$ is odd).
Then, for any constant $d\ge3$, the critical probability for dissemination of the process $\M_1(\sG_{n,d}; p)$ is equal to
\begin{equation}\label{balogh}
\p(d) := 1 - \inf_{y \in (0,1)} \frac{y}{F(d-1,1-y)},
\end{equation}
where $F(d,y)$ is the probability of obtaining at most $d/2$ successes in $d$ independent trials with success probability equal to $y$. Moreover,
\begin{equation}\label{balogh2}
\p(3)=1/2,
\qquad
\min\{\p(d) : d\ge3\} = \p(7) \approx 0.269,
\qquad\text{and}\qquad
\lim_{d\to\infty}\p(d)=1/2.
\end{equation}
The case of strict majority was studied by Rapaport, Suchan, Todinca and Verstra\"{e}te~\cite{RSTV} for various families of graphs. 
They showed that, for the wheel graph $W_n$ (a cycle of length $n$ augmented with a single universal vertex), $\pplus$ is the unique solution in the interval $[0,1]$ to the equation $\pplus+(\pplus)^2-(\pplus)^3=\frac12$ (that is, $\pplus\approx0.4030$); and they also gave bounds on $\pplus$ for the toroidal grid augmented with a universal vertex.
Moreover, they proved that, for every sequence $G_n$ of $3$-regular graphs of increasing order (that is, $|V(G_n)| < |V(G_{n+1})|$ for all $n\in\nat$) and every $p<1/2$, a.a.s.\ the process $\M_1(G_n;p)$ does not disseminate (so $\pminus\ge1/2$). Together with the result from~\eqref{balogh2} that $\p(3)=1/2$, their result implies, roughly speaking, that, for every sequence of $3$-regular graphs, dissemination is at least as `hard' as for random $3$-regular graphs. In view of this, they conjectured the following:
\begin{conj}[\cite{RSTV}]\label{conjecture}
Fix any constant $d\ge3$, and let $G_n$ be any arbitrary sequence of $d$-regular graphs of increasing order.
Then, for the strict majority bootstrap percolation process on $G_n$, we have $\pminus \ge \p(d)$. That is, for any constant $0\le p< \p(d)$, a.a.s.\ the process $\M_1(G_n;p)$ does not disseminate.
\end{conj}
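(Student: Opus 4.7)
The plan is to import the branching-process analysis that yields $\p(d)$ on random $d$-regular graphs and promote it into a universal lower bound on $\pminus$ for every sequence $G_n$ of $d$-regular graphs. Recall from~\eqref{balogh} that $\p(d)$ arises as the threshold for a non-trivial fixed point of the recursion governing the strict-majority process on the infinite $d$-regular tree $T_d$: for $p<\p(d)$, there is a constant probability $q=q(p,d)>0$ that, in the tree-indexed analogue of $\M_1$, the root of $T_d$ remains inactive in the final state. First I would revisit this tree computation and rewrite ``permanent inactivity'' as a directed recursion indexed by rooted sub-branches of $T_d$, verifying that for $p<\p(d)$ the least fixed point of this recursion is bounded below by a positive constant depending only on $p$ and $d$.

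The next step is to transfer this tree estimate to an arbitrary $d$-regular graph $G_n$. Fix a large radius $R=R(p,d)$ and, for every vertex $v\in V(G_n)$, let $B_R(v)$ denote the subgraph of $G_n$ induced by the ball of radius $R$ around $v$. Define the local event $\mathcal{I}_R(v)$ to say that the restriction of the initial configuration to $B_R(v)$ already forces $v$ to be inactive under any bootstrap process performed inside $B_R(v)$. Two ingredients would be needed. First, whenever $B_R(v)$ is isomorphic to the ball of radius $R$ in $T_d$, the tree analysis gives $\pr(\mathcal{I}_R(v))\ge q-o_R(1)$. Second, when $B_R(v)$ contains short cycles, one would invoke a monotone coupling to argue that identifying branches of $T_d$ can only reinforce the inactivity of $v$, because merging two would-be inactive neighbours of an internal vertex cannot turn that vertex into an active one. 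Together these observations should yield $\ex X = \Omega(n)$, where $X$ counts the vertices $v$ for which $\mathcal{I}_R(v)$ holds.

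To upgrade ``positive expectation'' into an a.a.s.\ statement, I would greedily select a family $S\subseteq V(G_n)$ of size $\Theta(n/d^R)$ whose balls $B_R(v)$ are pairwise disjoint; such a family exists in any $d$-regular graph. The indicators $\{\mathbf{1}_{\mathcal{I}_R(v)}\}_{v\in S}$ are then mutually independent, so a second-moment or Chernoff bound gives $|\{v\in S:\mathcal{I}_R(v)\}|>0$ a.a.s. Since $\mathcal{I}_R(v)$ implies that $v$ lies in the final inactive set of $\M_1(G_n;p)$, the process would fail to disseminate, which is exactly $\pminus\ge \p(d)$.

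\textbf{Main obstacle.} The decisive step is the monotone coupling in the second paragraph. On the tree $T_d$ a permanently inactive sub-branch is genuinely insulated from the rest of the process, whereas in a graph with cycles activation can reach $v$ from outside $B_R(v)$ along a path that re-enters $B_R(v)$ through a short cycle, or can be facilitated by global structural features of $G_n$---such as strong expansion or carefully engineered long-range wiring---that are invisible to any constant-radius inspection. Controlling these non-local reinforcements in purely combinatorial terms, with no assumption on the girth or the spectrum of $G_n$, is where the argument really has to bite, and any successful proof of the conjecture would require a robust structural replacement for the absence of cycles in $T_d$.
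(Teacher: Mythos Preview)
The statement you are attempting to prove is a \emph{conjecture} (quoted from~\cite{RSTV}), and the paper does not prove it---on the contrary, the paper \emph{disproves} it. Corollary~\ref{cor1} shows that for every sufficiently large constant $d$ there is a sequence of $d$-regular graphs $G_n$ with $\pplus<\p(d)$ (indeed $\pplus$ arbitrarily small), contradicting the conjectured inequality $\pminus\ge\p(d)$. So any purported proof of Conjecture~\ref{conjecture} must contain an error.

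You have in fact put your finger on exactly where your argument fails. The ``monotone coupling'' step---claiming that identifying branches of $T_d$ can only reinforce inactivity---is false in general. In a graph with carefully chosen long-range edges, activation can propagate through global structure that no constant-radius ball sees, and this is precisely the mechanism the paper exploits. The construction $\sL^*(n,k,r)$ combines a toroidal lattice $\sL(n,k)$ (which supports deterministic spreading of activity once a small ``seed'' appears) with $r$ random perfect matchings (which mop up the remaining inactive islands). Neither feature is detectable by inspecting $B_R(v)$ for bounded $R$: locally the graph looks like a tree, yet globally the process disseminates for arbitrarily small $p$. Your obstacle is not a technical nuisance to be overcome but the actual reason the conjecture is false.
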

Observe that, if the conjecture were true, then for every sequence of $d$-regular graphs of growing order, $\pminus \ge \p(d) \ge \p(7) \approx 0.269$. This motivated the following question:
\begin{que}[\cite{RSTV}]\label{question}
Is there any sequence of graphs $G_n$ such that their critical probability of dissemination (for strict majority bootstrap percolation) is $\p=0$?
\end{que}
Further results for strict majority bootstrap percolation on augmented wheels were given in~\cite{IPLKiwi}, and some experimental results for augmented tori and augmented random regular graphs were presented in~\cite{RapaExp}. The underlying motivation in both papers (in view of Question~\ref{question}) was the attempt to construct sequences of graphs $G_n$ such that a.a.s.\ $\M_1(G_n;p)$ disseminates for small values of $p$ (i.e.,~sequences $G_n$ with a small value of $\pplus$). However, to the best of our knowledge, for all graph classes investigated before the present paper, the values of $\pplus$ obtained were strictly positive.
We disprove Conjecture~\ref{conjecture} by constructing a sequence of $d$-regular graphs such that $\pplus$ can be made arbitrarily small by choosing $d$ large enough (see Theorem~\ref{thm:main} and Corollaries~\ref{cor1} and~\ref{cor2} below). Moreover, by allowing $d\to\infty$, we achieve $\pplus=0$, and thus we answer Question~\ref{question} in the affirmative.
It is worth noting that, if one considers the non-strict majority model ($r=0$) instead of the strict majority model ($r=1$), then Question~\ref{question} has a trivial answer as a result of the work of Balogh et al.~\cite{BBDM} on the $m$-dimensional grid $[n]^m$. Indeed, their results imply that the process $\M_0([n]^m;p)$ has critical probability $\p=0$. (In fact, they establish a sharp threshold for dissemination at $\p(n)=\lambda/\log n \to 0$, for a certain constant $\lambda>0$). However, the aforementioned results do not extend to the strict majority model. As a matter of fact, it is easy to show that the process $\M_1([n]^m;p)$ has critical probability $\p=1$.
In order to prove this, observe that if all the vertices in the cube $\{1,2\}^m$ or any of its translates in the grid $[n]^m$ are initially inactive, then they remain inactive at the final state. If $p<1$, then each of these cubes is initially inactive with positive probability, so a.a.s.\ there exists an initially inactive cube and we do not get dissemination.

\paragraph{Our sequence of regular graphs.}
To state our results precisely, we first need to define a sequence of regular graphs that disseminates `easily'. For each $n\in\nat$ and $k=k(n)\in\nat$, consider the following graph $\sL(n,k)$: the vertices are the $n^2$ points of the toroidal grid $[n]^2$ with coordinates taken modulo $n$; each vertex $v=(x,y)$ is connected to the vertices $v+w$, where $w \in K :=\{-k,\ldots,-1,0,1,\ldots,k\} \times \{-1,1\}$. Assuming that $2k+1\le n$ (so that the neighbourhood of a vertex does not wrap around the torus), we have that $|K|=2(2k+1)=4k+2$, and thus our graph $\sL(n,k)$ is $(4k+2)$-regular. Therefore, in the process $\M_{2r}(\sL(n,k),p)$, an inactive vertex needs at least $2k+ r+1$ active neighbours to become active. Next, for even $n$ and $r=r(n)\in\nat$, we also consider the (random) graph $\sL^*(n,k,r)$, consisting of adding $r$ random perfect matchings to $\sL(n,k)$. These matchings are chosen u.a.r.\ from the set of perfect matchings of $[n]^2$ conditional upon not creating multiple edges (i.e.~the perfect matchings are pairwise disjoint and do not use any edge from $\sL(n,k)$). Note that $\sL^*(n,k,r)$ is $(4k+r+2)$-regular.
Moreover, the process $\M_r(\sL^*(n,k,r);p)$ has the same activation rule as $\M_{2r}(\sL(n,k);p)$: namely,
an inactive vertex becomes active at some round of the process if it has at least $2k+r+1$ active neighbours. In view of this and since $\sL(n,k)$ is a subgraph of $\sL^*(n,k,r)$, we can couple the two processes in a way that the set of active vertices of $\M_{2r}(\sL(n,k);p)$ is always a subset of that of $\M_r(\sL^*(n,k,r);p)$.
We will show that for every $p>0$ (and even $p =p(n) \to 0$ not too fast as $n\to \infty$) and every not too large $r\in\nat$, there is $k\in\nat$ such that a.a.s.\ $\M_{r}(\sL^*(n,k,r);p)$ disseminates. On a high level, our analysis comprises two phases: in phase $1$, we will consider 
$\M_{2r}(\sL(n,k);p)$ and show that most vertices become active in this phase. In phase $2$, we incorporate the effect of the $r$ perfect matchings and consider then $\M_r(\sL^*(n,k,r);p)$ to show that all remaining inactive vertices become active. This $2$-phase analysis is motivated by the fact that the final set of inactive vertices of $\M_r(\sL^*(n,k,r);p)$ is a subset of the final set of inactive vertices of $\M_{2r}(\sL(n,k);p)$, in view of the aforementioned coupling between the two processes.

\paragraph{Notation and results.} We use standard asymptotic notation for $n \to \infty$. All logarithms in this paper are natural logarithms. We make no attempt to optimize the constants involved in our claims.

\bigskip

Our main result is the following:
\begin{thm}\label{thm:main}
Let $p_0>0$ be a sufficiently small constant. Given any $p=p(n)\in[0,1]$, $k=k(n)\in\nat$ and $r=r(n)\in\nat$ satisfying (eventually for all large enough even $n\in\nat$),
\begin{equation}\label{eq:pkrthm}
200\frac{(\log\log n)^{2/3}}{(\log n)^{1/3}} \le p \le p_0, \qquad
\frac{1000}{p}\log(1/p) \le k \le \frac{p^2\log n}{3000\log(1/p)},
\quad\text{and}\quad
1\le r \le \frac{pk}{20},
\end{equation}
consider the $r$-majority bootstrap percolation process $\M_r(\sL^*(n,k,r);p)$ on the $(4k+r+2)$-regular graph $\sL^*(n,k,r)$, where each vertex is initially active with probability $p$. Then, $\M_r(\sL^*(n,k,r);p)$ disseminates a.a.s.
\end{thm}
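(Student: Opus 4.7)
I would follow the two-phase decomposition outlined by the authors, leveraging the monotone coupling between $\M_{2r}(\sL(n,k);p)$ and $\M_r(\sL^*(n,k,r);p)$. Phase~1 analyses the $2r$-majority process purely on the deterministic lattice $\sL(n,k)$ and aims to show that only a small exceptional set $S$ fails to activate. Phase~2 uses the $r$ random perfect matchings to finish off every $v\in S$.

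\textbf{Phase~1 (nucleation and growth in $\sL(n,k)$).} The lattice has a clean row structure: each $v=(x,y)$ has exactly $2k+1$ neighbours in each of rows $y\pm1$ and none in row $y$, while the threshold in $\M_{2r}$ is $2k+r+1$, only $r$ above bare majority. I would (i) define a \emph{seed} to be a block of a few consecutive rows and $\Theta(k)$ consecutive columns in which each row contains at least $(1/2+c)(2k+1)$ initially active vertices for a suitable constant $c=c(p,r)>0$; by a Chernoff bound a given window is a seed with probability $\exp(-\Theta(p^2k))$, and the bound $k \le p^2\log n/(3000\log(1/p))$ makes this a mild negative power of $n$, so a.a.s.\ many disjoint seeds appear among the $n^2$ candidate positions. (ii) Prove a monotone \emph{growth lemma}: once an active rectangle is much wider than $k$, the vertices on the adjacent rows (or columns) have essentially all of their $\sL(n,k)$-neighbourhood inside the rectangle, so a Chernoff estimate on the surplus of active vertices among the $2k+1$ relevant positions gives the $2k+r+1$ threshold. (iii) Iterate and union-bound over the $O(n^2)$ positions where growth could fail, to conclude that each seed grows into a cluster of size $n^2(1-o(1))$ and hence $|S|=o(n^2)$ a.a.s. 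The lower bound $p\ge 200(\log\log n)^{2/3}/(\log n)^{1/3}$ in \eqref{eq:pkrthm} is exactly what keeps the Chernoff losses $\exp(-\Theta(p^2k))$ ahead of the polylog costs of these union bounds.

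\textbf{Phase~2 (finishing with the matchings).} After Phase~1, $S$ is the $(2k-r+2)$-core of $\sL(n,k)$ restricted to never-activated vertices, so every $v\in S$ has at most $2k+r$ active $\sL$-neighbours. To push $v$ past the $\M_r$-threshold of $2k+r+1$ in $\sL^*(n,k,r)$ it suffices that $v$'s deficit of active $\sL$-neighbours be at most $r-1$ and that all $r$ of its matching-partners be active. I would therefore sharpen the Phase~1 growth analysis so that, a.a.s., every $v\in S$ has deficit at most $r-1$; configurations with a larger deficit at a fixed vertex are rarer than the failure events already bounded in Phase~1 and are absorbed by the same union bound. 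Since the $r$ random matchings are (modulo a harmless conditioning to avoid double edges, handled by a standard switching argument) independent of the initial labelling and of the Phase~1 dynamics, and $|S|=o(n^2)$, a routine union bound over $v\in S$ and its $r$ matching-partners shows that a.a.s.\ every such partner is active, giving $v$ the extra $r$ active neighbours it needs. This completes dissemination.

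\textbf{Main obstacle.} The crux of the argument is the Phase~1 growth lemma: with the threshold only $r$ above bare majority, even a mild local excess of inactive vertices along a boundary can block an active rectangle from extending, so seed sizes and the density surplus $c$ must be chosen so that Chernoff tails defeat $\approx n^2$ simultaneous local failures, while $r$ remains small enough (relative to $pk$) that whatever survives can be cleaned up by the matching stage. The precise interplay between the three inequalities in \eqref{eq:pkrthm}, together with preserving the refined deficit bound for Phase~2 throughout the same union bounds, is where the quantitative work lies.
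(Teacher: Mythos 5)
Your two--phase decomposition matches the paper's high-level outline, but both phases have serious gaps in the details, and the Phase~2 mechanism is genuinely different from (and weaker than) what is needed.

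\textbf{Phase~1.} Your claim that a seed ``grows into a cluster of size $n^2(1-o(1))$'' by iterating a growth lemma and union-bounding over $O(n^2)$ failure positions does not go through as stated. Individual vertices are ``bad'' (lacking enough active neighbours in some quadrant) with probability $\exp(-\Theta(pk))$, which under \eqref{eq:pkrthm} is a negative power of $n$ but not smaller than $n^{-2}$; hence a.a.s.\ there \emph{are} many bad vertices, and deterministic growth past a bad region is blocked. You cannot avoid a genuine percolation argument controlling the geometry of the bad set. The paper handles this by tessellating $[n]^2$ into cells, declaring a cell good when all nearby vertices are good, and applying the Liggett--Schonmann--Stacey domination theorem together with the Deuschel--Pisztora crossing result to obtain an $\varepsilon$-ubiquitous connected component of good cells. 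The output of this argument is much stronger than ``$|S|=o(n^2)$'': it gives quantitative control (property~(iii) of $\varepsilon$-ubiquitous) on \emph{how many} bad components of each diameter can coexist, and that control is exactly what Phase~2 consumes.

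\textbf{Phase~2.} The step ``sharpen Phase~1 so that every $v\in S$ has deficit at most $r-1$'' is false. A fixed vertex has no active $\sL$-neighbours at all with probability $(1-p)^{4k+2}=n^{-\Theta(1)}$, so a.a.s.\ $S$ contains many vertices with deficit close to $2k+r+1$, far above $r-1$. And even if the deficit bound held, the concluding ``routine union bound over $v\in S$ and its $r$ matching-partners'' does not close: with $|S|$ polynomial in $n$ and failure probability per vertex only $o(1)$, the union bound diverges. The paper's argument is structurally different: it observes that $U^\circ$ is the $(2k+2)$-core of $\sL^*$ restricted to the never-activated set, and that each small $\ell_\infty$-component of the cells containing $U^\circ$ has four extremal vertices ($v_{\mathtt T},v_{\mathtt B},v_{\mathtt L},v_{\mathtt R}$) each with at most $2k+1$ $\sL$-neighbours inside $U^\circ$, hence each \emph{forced} to have a matching edge back into $U^\circ$. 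This reduces survival of any component to a rare self-supporting configuration of the random matchings, whose probability is then controlled by a switching argument (Lemma~\ref{lem:switchings}) summed against the combinatorial bound supplied by $\varepsilon$-ubiquity. You need some version of this ``the surviving set must feed itself through the matchings'' idea; an argument that only asks for one active matching-partner per vertex is too weak.
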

\begin{remark}\hspace{0cm}
\begin{enumerate}
\item
By our assumptions on $p$, it is easy to verify that $\lceil \frac{1000}{p}\log(1/p) \rceil < \lfloor \frac{p^2\log n}{3000\log(1/p)} \rfloor$ (see~\eqref{eq:k0k1} in the proof of Proposition~\ref{prop:phase1}), and so the range for $k$ is non-empty, and the statement is not vacuously true. In particular, $k = \lceil \frac{1000}{p}\log(1/p) \rceil$ satisfies the assumptions of the theorem.

\item
Note that the lower bound required for $k$ in terms of $p$ is almost optimal: in Theorem~2 of~\cite{RSTV}, the authors showed (for the $1$-majority model) that for any sequence of $d$-regular graphs (of increasing order) with $d < 1/p$ (in the case of odd $d$) or $d < 2/p$ (in the case of even $d$), a.a.s.\ dissemination does not occur. (For the $r$-majority model with $r\ge 2$, dissemination is even harder.) Hence, setting $k = \lceil \frac{1000}{p}\log(1/p) \rceil$, our sequence of $\Theta(k)$-regular graphs $\sL^*(n,k,r)$ has the smallest possible degree up to an additional $\Theta(\log(1/p))$ factor for achieving dissemination.
\end{enumerate}
\end{remark}

As a consequence of Theorem~\ref{thm:main}, we get the following two corollaries. The first one follows from an immediate application of Theorem~\ref{thm:main} with
\[
p=200\tfrac{(\log\log n)^{2/3}}{(\log n)^{1/3}},
\qquad
k = \lfloor \tfrac{p^2\log n}{3000\log(1/p)} \rfloor
\qquad\text{and}\qquad
r =  \lfloor 400 \log\log n \rfloor,
\]
together with the monotonicity of the process $\M_r(\sL^*(n,k,r);p)$ with respect to $p$ and $r$.
\begin{cor}\label{cor2}
There is $d = \Theta \left( (\log n \cdot \log\log n)^{1/3} \right)$, and a sequence $G_n$ of $d$-regular graphs of increasing order such that, for every
\[
200\frac{(\log\log n)^{2/3}}{(\log n)^{1/3}} \le p \le 1
\qquad\text{and}\qquad
1 \le r \le  400 \log\log n,
\]
the process $\M_r(G_n;p)$ disseminates a.a.s.
\end{cor}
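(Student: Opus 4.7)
\textbf{Proof plan for Corollary~\ref{cor2}.} The plan is to invoke Theorem~\ref{thm:main} at a single triple $(p^*, k, r^*)$ that matches the endpoints of the admissible ranges, then derandomize the random graph $\sL^*(n,k,r^*)$ to extract a deterministic sequence $G_n$, and finally use the monotonicity properties of $\M_r$ to extend the dissemination conclusion to every $(p,r)$ allowed by the corollary.

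Set $p^* = 200(\log\log n)^{2/3}/(\log n)^{1/3}$, $r^* = \lfloor 400\log\log n\rfloor$, and $k = \lfloor (p^*)^2 \log n /(3000\log(1/p^*))\rfloor$. Using $\log(1/p^*) = \tfrac{1}{3}\log\log n \,(1+o(1))$, a direct computation gives $k = (40+o(1))(\log n\cdot\log\log n)^{1/3}$ and $p^* k/20 = 400\log\log n + \Omega(\log\log\log n) \ge r^*$ for all large $n$; the remaining inequalities in~\eqref{eq:pkrthm} follow equally easily from the definitions of $p^*$, $k$, and $r^*$. Theorem~\ref{thm:main} therefore applies and gives that $\M_{r^*}(\sL^*(n,k,r^*); p^*)$ disseminates a.a.s.

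Because this a.a.s.\ event $E_n$ holds with probability $1-\eps_n\to 1$ over the joint randomness of both the matchings defining $\sL^*$ and the initial activation, a Markov-inequality argument applied to the matching randomness yields, for each large even $n$, a deterministic realization $G_n$ of $\sL^*(n,k,r^*)$ for which $\M_{r^*}(G_n;p^*)$ still disseminates with probability at least $1-\sqrt{\eps_n}$ over the initial activation alone. Each such $G_n$ has $n^2$ vertices (so $|V(G_n)|$ is strictly increasing) and is $d$-regular with $d = 4k + r^* + 2 = \Theta((\log n \cdot \log\log n)^{1/3})$, using $r^* = O(\log\log n) = o(k)$. Fixing such a sequence, the monotonicity of $\pr[\M_r(G_n;p)\text{ disseminates}]$ (non-decreasing in $p$, non-increasing in $r$) immediately promotes this single-triple guarantee to a.a.s.\ dissemination for every $p\ge p^*$ and every $1\le r\le r^*$, which is precisely the statement of the corollary. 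There is no substantial mathematical obstacle: Theorem~\ref{thm:main} does all the real work, and the mild points to check — the arithmetic verification of~\eqref{eq:pkrthm} at the chosen triple and the derandomization step — are both routine.
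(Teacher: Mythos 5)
Your proposal is correct and follows essentially the same route as the paper: apply Theorem~\ref{thm:main} at the extreme triple $p^* = 200(\log\log n)^{2/3}/(\log n)^{1/3}$, $k = \lfloor (p^*)^2\log n/(3000\log(1/p^*))\rfloor$, $r^* = \lfloor 400\log\log n\rfloor$, then invoke monotonicity in $p$ and $r$. The only thing you make explicit that the paper elides is the derandomization step passing from the random graph $\sL^*(n,k,r^*)$ to a deterministic sequence $G_n$; this is a genuine (if routine) gap in the paper's one-line proof, and your treatment of it is sound (indeed one can simply take, for each $n$, the realization of the matchings minimizing the conditional failure probability, so Markov is not even needed).
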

Setting $r=1$, this corollary answers Question~\ref{question} in the affirmative. The second corollary concerns the case in which all the parameters are constant.
\begin{cor}\label{cor1}

For any constants $0<p\le1$ and $r\in\nat$, there exists $d_0\in\nat$ satisfying the following. For every natural $d\ge d_0$, there is a sequence $G_n$ of $d$-regular graphs of increasing order such that the $r$-majority bootstrap percolation process
$\M_r(G_n;p)$ a.a.s.\ disseminates.
\end{cor}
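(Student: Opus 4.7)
The plan is to deduce Corollary~\ref{cor1} from Theorem~\ref{thm:main} by a parameter-tuning argument together with a routine derandomisation. By the monotonicity of $\M_r(G;p)$ in $p$ (obtained by a standard coupling of initial active sets, since enlarging the initial set can only help dissemination), it suffices to handle $p \le p_0$, where $p_0$ is the constant supplied by Theorem~\ref{thm:main}.

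So fix constants $p \in (0, p_0]$ and $r \ge 1$. Given a target degree $d$, I set
\[
k := \left\lfloor \frac{d - r - 2}{4} \right\rfloor \qquad \text{and} \qquad r' := d - 4k - 2,
\]
so that $r' \in \{r, r+1, r+2, r+3\}$ and $4k + r' + 2 = d$. Choosing $d_0$ sufficiently large in terms of $p$ and $r$, for every $d \ge d_0$ one has
$k \ge \frac{1000}{p} \log(1/p)$ and $r' \le pk/20$: indeed, $r' \le r + 3$ stays bounded while $k = \Theta(d) \to \infty$.

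With these $k$ and $r'$ fixed as functions of $d$, I would apply Theorem~\ref{thm:main} to the process $\M_{r'}(\sL^*(n,k,r'); p)$. Because $p$ and $k$ are constants, the upper bound $k \le p^2 \log n / (3000 \log(1/p))$ and the lower bound on $p$ in~\eqref{eq:pkrthm} both hold for all sufficiently large even $n$. The theorem then yields that $\M_{r'}(\sL^*(n,k,r'); p)$ disseminates a.a.s., where the probability is taken over both the random matchings defining $\sL^*(n,k,r')$ and the initial active set. Since $r' \ge r$, monotonicity of the process in the parameter $r$ implies that $\M_r(\sL^*(n,k,r'); p)$ also disseminates a.a.s.; note that by construction $\sL^*(n,k,r')$ is $(4k + r' + 2) = d$-regular.

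Finally, to produce a deterministic sequence, I would derandomise by an averaging argument: letting $q_n(H) := \pr[\,\M_r(H;p)\text{ disseminates}\,]$ for each realisation $H$ in the support of $\sL^*(n,k,r')$, the previous paragraph says $\ex[q_n(\sL^*(n,k,r'))] \to 1$, so for every sufficiently large even $n$ there exists a specific $d$-regular graph $G_n$ in this support with $q_n(G_n) \to 1$. Since $|V(G_n)| = n^2$ is strictly increasing in $n$, the sequence has increasing order. The only real \emph{obstacle} is the trivial bookkeeping check that every integer $d \ge d_0$ admits a decomposition $d = 4k + r' + 2$ with $r \le r' \le pk/20$, which is immediate once $d$ is large enough that $pk/20 \ge r + 3$.
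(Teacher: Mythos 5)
Your proof is correct and takes essentially the same route as the paper's: reduce to small $p$ by monotonicity, decompose $d = 4k + r' + 2$ with $r' \in \{r,\dots,r+3\}$ and $k$ large, apply Theorem~\ref{thm:main} to $\M_{r'}(\sL^*(n,k,r');p)$, and use monotonicity in $r$. The paper parameterises by fixing $k$ and $i\in\{0,1,2,3\}$ and letting $d=4k+r+2+i$ range, whereas you solve for $(k,r')$ given $d$; these are the same argument read in opposite directions, and your explicit averaging step to extract a deterministic $d$-regular sequence from the a.a.s.\ statement about the random graph $\sL^*(n,k,r')$ is a point the paper leaves implicit.
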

\begin{proof}[Proof (assuming Theorem~\ref{thm:main})]
Fix $r\in\nat$. In view of the monotonicity of the process $\M_r(G_n;p)$ with respect to $p$, we only need to prove the statement for any sufficiently small constant $p>0$. In particular, we assume that $p\le p_0$ (where $p_0$ is the constant in the statement of Theorem~\ref{thm:main}) and also that $r+3 \le pk/20$, where $k_0=\lceil \frac{1000}{p}\log(1/p)\rceil$.
For any fixed natural $k\ge k_0$ and any $i\in\{0,1,2,3\}$, we apply Theorem~\ref{thm:main} with the same values of $p$ and $k$ but with $r+i$ instead of $r$. We conclude that there is a sequence $G_n$ of $d=(4k+r+2+i)$-regular graphs (of increasing order) such that $\M_{r+i}(G_n;p)$ disseminates a.a.s.\ (and thus $\M_r(G_n;p)$ also disseminates a.a.s., by monotonicity). Note that every natural
$d \ge 4k_0 + r +2$ was considered, and hence the proof of the corollary follows.
\end{proof}

In particular, since $\lim_{d\to\infty} \p(d)=1/2$ (cf.~\eqref{balogh2}),  Corollary~\ref{cor1} implies that, for every sufficiently large constant $d$, there is a sequence of $d$-regular graphs of increasing order such that (for the $1$-majority model) $\pplus < \p(d)$, which disproves Conjecture~\ref{conjecture}.

\paragraph{Organization of the paper.} In Section~\ref{sec:deterministic} we show that, given certain configurations, the set of active vertices of $\M_{r}(\sL(n,k);A)$ grows deterministically. Section~\ref{sec:perco} deals with Phase $1$ using tools from percolation theory. Section~\ref{sec:matchings} then analyzes the effect of the added perfect matchings, and concludes with the proof of the main theorem by combining the previous results with the right parameters. 
%
%
%
\section{Deterministic growth}\label{sec:deterministic}
In this section, we show that, under the right circumstances, the set of active vertices grows deterministically in $\M_{r}(\sL(n,k);A)$. For convenience, we will describe (sets of) vertices in $\sL(n,k)$ by giving their coordinates in $\ent^2$, and mapping them to the torus $[n]^2$ by the canonical projection. This projection is not injective, since any two points in $\ent^2$ whose coordinates are congruent modulo $n$ are mapped to the same vertex in $[n]^2$, but this will not pose any problems in the argument.

Given an integer $1\le m\le k$, we say a vertex $v$ is \emph{$m$-good} (or just \emph{good}) if each one of the following four sets contains at least $2\lceil k/m\rceil$ active vertices:
\[
v+\{1,2,\ldots,k\}\times\{1\}; \quad
v+\{1,2,\ldots,k\}\times\{-1\}; \quad
v-\{1,2,\ldots,k\}\times\{1\}; \quad
v-\{1,2,\ldots,k\}\times\{-1\}.
\]
Otherwise, call the vertex \emph{$m$-bad}.

For any nonnegative integers $a$ and $b$, we define the set $S^k_m(a,b)  \subseteq  [n]^2$ as 
\begin{equation*}\label{eq:cloud}
S^k_m(a,b) = \bigcup_{|i|\le m+a+1} [-x_i,x_i]\times \{i\},
\end{equation*}
where the sequence $x_i$ satisfies 
\begin{equation}\label{eq:x_i}
\begin{cases}
x_{m+a+1}=b
\\
x_i = x_{i+1} + k  & m\le i\le m+a
\\
x_i = x_{i+1} +  i\lceil k/m\rceil & 0\le i\le m-1
\\
x_{-i}=x_i & 0\le i\le m+a+1. 
\end{cases}
\end{equation}
(See Figure~\ref{fig:cloud} for a visual depiction of $S^k_m(a,b)$.)
\begin{figure}[h]
\centerline{\includegraphics{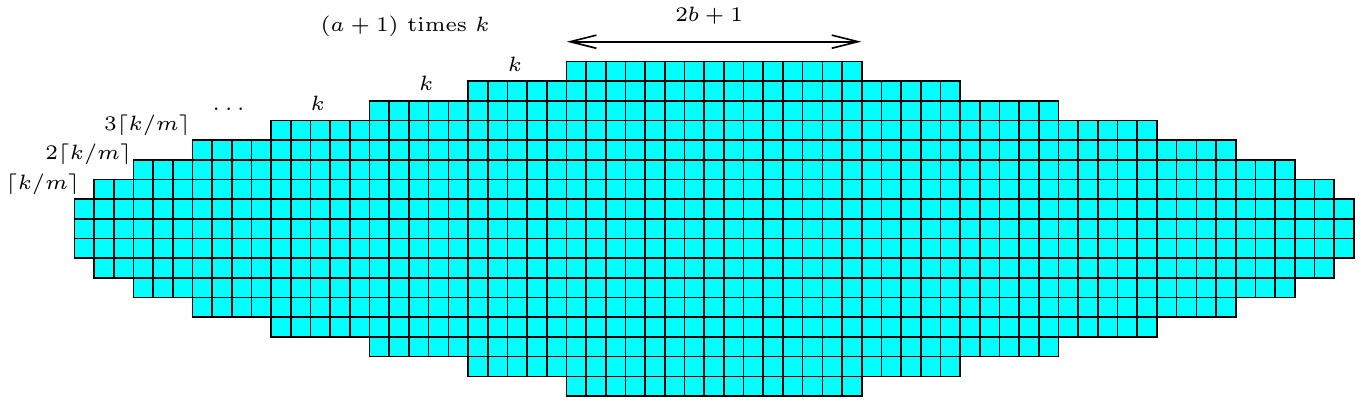}}
\caption{$S^k_m(a,b)$ with $m=5$, $k=5$, $a=2$ and $b=7$.}\label{fig:cloud}
\end{figure}
\remove{
\begin{figure}[h]
\centerline{\includegraphics{cloud2}}
\caption{$S^k_m(a,b)$ with large parameters.}\label{fig:cloud2}
\end{figure}
}
Observe that, since $k\ge m$, $\lceil k/m\rceil \leq 2k/m$, and therefore
\[
x_{0} =  b+(a+1)k + \sum_{i=1}^{m-1}i \lceil k/m\rceil  = b+(a+1)k + \lceil k/m\rceil \frac{m(m-1)}{2} \le b+(m+a)k,
\]
so 
\begin{equation}\label{eq:cloudin}
S^k_m(a,b) \subseteq [-b-(m+a)k,b+(m+a)k] \times [-m-a-1, m+a+1].
\end{equation}
In particular, 
\begin{equation}\label{eq:cloud0in}
S^k_m(0,0) \subseteq [-2mk,2mk] \times [-2m, 2m]
\qquad\text{and}\qquad
|S^k_m(0,0)| \le 25m^2k.
\end{equation}
Moreover, since $x_i \ge x_{i+1} + 1$ for $m\le i\le m+a$ (i.e.~the length of each row increases by at least one unit to the left and to the right) and a symmetric observation for rows $-m\le i\le -m-a$, we get
\begin{equation}\label{eq:cloudout}
S^k_m(2a,0) \supseteq [-a,a] \times [-a, a].
\end{equation}

A set of vertices $U\subseteq[n]^2$ is said to be \emph{active} if all its vertices are active. Note that $S^k_m(a,b) \subseteq S^k_m(a+1,b)$. The next lemma shows that, if $S^k_m(a,b)$ is active and all vertices in $S^k_m(a+1,b)$ are good (or already active), then eventually $S^k_m(a+1,b)$ becomes active too.
\begin{lem}\label{lem:growcloud}
 Given any integers $a,b\ge0$,  $1\le m < k$ and $r\le \lceil k/m\rceil$, suppose that $S^k_m(a,b)$ is active and all vertices in $S^k_m(a+1,b)$ are $m$-good or active in the $r$-majority bootstrap percolation process. Then, deterministically $S^k_m(a+1,b)$ eventually becomes active.
\end{lem}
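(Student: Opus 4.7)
Plan: To show that $S^k_m(a+1,b)$ fully activates, I will process the vertices of $S^k_m(a+1,b) \setminus S^k_m(a,b)$ in successive rounds, treating the two new outer rows $|i| = m+a+2$ last. By symmetry I focus on the right half, writing $v = v_\ell^{(i)} := (x_i^{(a)}+\ell, i)$ for $|i| \le m+a+1$ and $1 \le \ell \le k$. Every such $v$ is either already active or $m$-good by hypothesis; in the latter case I must show its active-neighbour count reaches the threshold $2k+1+\lceil r/2 \rceil$.

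The counting is organised around three disjoint sources of active neighbours: (a) $S^k_m(a,b) \cap N(v)$; (b) extensions in $N(v)$ activated in earlier rounds, guaranteed by induction; and (c) initially active vertices guaranteed by the $m$-goodness of $v$ inside the four quarter-neighbourhoods, specifically in those quarters that lie outside $S^k_m(a,b) \cup$ (previous extensions). Let $s_j = x_j^{(a)} - x_{j+1}^{(a)}$ be the shape's downward slope at level $j$, so $s_j = k$ for $|j| \ge m$ and $s_j = j\lceil k/m\rceil$ for $1 \le j \le m-1$ (with $s_0 = 0$). A direct computation shows that, after round $\ell-1$, sources (a) and (b) together contribute exactly $s_{i-1}+k$ active neighbours in the ``inner'' row $i-1$ (obtained as $s_{i-1}+k-\ell+1$ from $S^k_m(a,b)$ plus $\ell-1$ from previous extensions) and $k - s_i$ in the ``outer'' row $i+1$, for a combined total of $2k + s_{i-1} - s_i$. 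Meanwhile, the quarter-neighbourhood NE of $v$ (in row $i+1$, positive-column side) sits entirely above both $S^k_m(a,b)$ and every extension activated so far, so $m$-goodness delivers $\ge 2\lceil k/m\rceil$ fresh active neighbours from (c); analogous contributions arise from NW or SE where the shape's slope is gentle. Putting this together and using $r \le \lceil k/m\rceil$, which gives $\lceil r/2\rceil \le \lceil k/m\rceil$, one verifies that $v$'s active-neighbour count exceeds $2k+1+\lceil r/2\rceil$ at round $\ell$.

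The activation order itself is chosen by analysing which $v$ have a large enough inactive-neighbour count already at each round: roughly, the ``innermost'' extensions in each row activate first, and rows near the outer boundary $|i| = m+a+1$ begin activating earlier than middle rows, but the interdependence means one sometimes must wait for a neighbouring row's first extension before the next column can activate. After every row's extensions are active, one further round handles the new rows $|i| = m+a+2$: each such vertex sees all $2k+1$ of its neighbours in the fully-extended row $|i| = m+a+1$, plus $\ge 4\lceil k/m\rceil$ from $m$-goodness in the (still inactive) row $|i| = m+a+3$. The main obstacle is the case analysis at the boundary between the outer regime ($|i| \ge m$, slope $k$) and the middle regime ($|i| \le m-1$, slope $i\lceil k/m\rceil$): in the middle regime $s_{i-1} - s_i$ can be as negative as $-\lceil k/m\rceil$, so one must account carefully for which quarter-neighbourhoods still overlap $S^k_m(a,b)$ and which are disjoint from it, to confirm that the guaranteed extras from (c) compensate the deficit in (a)+(b) and the tight slack left by $r \le \lceil k/m\rceil$ suffices.
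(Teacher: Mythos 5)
Your plan follows essentially the paper's proof: round $\ell$ activates $(\pm(x_i+\ell),i)$ in every row, which is exactly the step from $S^k_m(a,b+\ell-1)$ to $S^k_m(a,b+\ell)$; the NE (respectively SE, for $i<0$) quarter-neighbourhood of each new vertex lies entirely outside the already-active region and supplies $\ge 2\lceil k/m\rceil$ fresh active vertices, which covers the worst-case slope deficit $s_{i-1}-s_i = -\lceil k/m\rceil$; and the two new outer rows $|i|=m+a+2$ are treated separately at the end, seeing $2k+1$ active vertices in the full row below plus $\ge 4\lceil k/m\rceil$ from goodness above. So the decomposition, the role of goodness, and the final step are the same.

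One computational step as written does not quite close: in the worst case your sources (a)+(b)+(c) give exactly $2k + \lceil k/m\rceil$ active neighbours, while the activation threshold is $2k+1+\lceil r/2\rceil$, and the inequality you invoke, $\lceil r/2\rceil \le \lceil k/m\rceil$, only bounds the threshold by $2k+1+\lceil k/m\rceil$, which is one more than your count. The fact that actually saves the argument is $\lceil k/m\rceil \ge 2$ (a consequence of the hypothesis $m<k$), which gives $\lceil r/2\rceil \le \lceil \lceil k/m\rceil/2\rceil \le \lceil k/m\rceil-1$; you should make this explicit. The paper avoids the ceiling entirely by comparing the majority difference directly: a vertex with $\ge 2k+\lceil k/m\rceil$ active neighbours has at most $2k+2-\lceil k/m\rceil$ inactive ones, and $(2k+\lceil k/m\rceil)-(2k+2-\lceil k/m\rceil) = 2(\lceil k/m\rceil-1) \ge \lceil k/m\rceil \ge r$, which is cleaner than manipulating $\lceil r/2\rceil$. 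Also note that your formula $s_j = k$ for $|j|\ge m$ is only meaningful for $m \le j \le m+a$ (and by symmetry with the obvious sign change for negative $j$); the top row $i=m+a+1$ should be treated as having no outer-row contribution, which makes the count there $2k$ from (a)+(b) plus $\ge 2\lceil k/m\rceil$ from (c), and that is still enough.
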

\begin{proof}
Put $k'=\lceil k/m\rceil\ge2$. Note that any vertex with at least $2k+k'$ active neighbours has at most $2k+2-k'$ inactive neighbours, and thus becomes active since $(2k+k')-(2k+2-k') = 2(k'-1) \ge k'\ge r$.
Our first goal is to show that we can make active one extra vertex to the right and to the left of each row in $S^k_m(a,b)$. Let $x_i$ be as in~\eqref{eq:x_i}. For each $0\le i\le m+a+1$, 
consider the vertex $v_i = (x_i+1,i)$. Observe that $v_i\in S^k_m(a+1,b)$, so it must be active or good. If $v_i$ is active, then we are already done. Suppose otherwise that $v_i$ is good. By the definition of $S^k_ m(a,b)$, $v_i$ has at least $\min\{ k+(i-1)k', 2k\}$ neighbours in $S^k_m(a,b)$ one row below, and $\max\{k-ik',0\}$ one row above, so in particular at least $2k - k'$ neighbours in $S^k_m(a,b)$, which are active. Additionally, since $v_i$ is good, it has at least $2k'$  extra active neighbours above and to the right, so it becomes active. By symmetry, we conclude that, for every $|i|\le m+a+1$, vertices $(-x_i-1,i)$ and  $(x_i+1,i)$ become active. Therefore, all vertices in $S^k_m(a,b+1)$ become active.

A close inspection of~\eqref{eq:x_i} yields the following chain of inclusions:
\begin{equation}\label{eq:chain}
S^k_m(a,b)\subseteq S^k_m(a,b+1)\subseteq\cdots S^k_m(a,b+k)\subseteq S^k_m(a+1,b).
\end{equation}
In view of this,
the same argument can be inductively applied to show that for every $0 \leq j \leq k-1$, if all vertices in $S^k_m(a,b+j)$ are active, then we eventually reach a state in which all vertices in $S^k_m(a,b+j+1)$ become active as well. (Note that the argument requires that
the newly added vertices $v_i$ satisfy $v_i\in S^k_m(a+1,b)$, which follows from~\eqref{eq:chain}.)
 
 Finally, observe that all vertices in $[-b,b]\times\{-m-a-2, m+a+2\}$ have $2k+1$ neighbours in $S^k_m(a,b+k)$ (either in the row below or the row above). Since these vertices are good, they have at least $4k'$ active neighbours not in $S^k_m(a,b+k)$, and thus they become active too. We showed that all vertices in $S^k_m(a+1,b)$ became active, and the proof of the lemma is finished.
\end{proof}

We consider two other graphs $\sL_1(n)$ and $\sL_\infty(n)$ on the same vertex set $[n]^2$ as $\sL(n,k)$. Two vertices $(x,y)$ and $(x',y')$ in $[n]^2$ are adjacent in $\sL_1(n)$ if
\[
\begin{cases}
x'=x
\\
y'-y \equiv \pm 1 \mod n;
\end{cases}
\qquad\text{or}\qquad
\begin{cases}
y'=y
\\
x'-x \equiv \pm 1 \mod n.
\end{cases}
\]
Similarly, $(x,y)$ and $(x',y')$ are adjacent in $\sL_\infty(n)$ if
\[
(x,y)\ne(x',y')
\qquad\text{and}\qquad
\begin{cases}
x'-x \equiv 0,\pm 1 \mod n
\\
y'-y \equiv 0,\pm 1 \mod n.
\end{cases}
\]
In other words, $\sL_1(n)$ is the classical square lattice $n\times n$, and  $\sL_\infty(n)$ is the same lattice with diagonals added.
Given any two vertices $u,v\in[n]^2$, the $\ell_1$-distance and $\ell_\infty$-distance between $u$ and $v$ respectively denote their graph distance in $\sL_1(n)$ and  $\sL_\infty(n)$. (These correspond to the usual $\ell_1$- and $\ell_\infty$-distances on the torus.)
Also, we say that a set $U\subseteq[n]^2$ is \emph{$\ell_1$-connected} (or \emph{$\ell_\infty$-connected}) if the subgraph of $\sL_1(n)$ (or $\sL_\infty(n)$) induced by $U$ is a connected graph.
\remove{
We always measure distances between two elements in $[n]^2$ as the graph distance with respect to $\sL_1(n)$. (This is the usual $\ell_1$-distance on the torus.) Also, we say that a set $U\subseteq[n]^2$ is \emph{connected}, if the subgraph of $\sL_1(n)$ induced by $U$ is connected. Even when referring to vertices of $\sL(n,k)$, we use the notions of distance and connectivity we just described in terms of $\sL_1(n)$.
}
Given two sets $U,U'\subseteq [n]^2$, we say $U'$ is a translate of $U$ if there exists $(x,y)\in\ent^{2}$ such that $U'=(x,y)+U$ (recall that we interpret coordinates modulo $n$).

\begin{lem}\label{lem:growconn}
Let $k,m,r\in\ent$ satisfying $1\le m < k$ and $r \le \lceil k/m\rceil$. Suppose that $U\subseteq[n]^2$ has the following properties: $U$ is $\ell_1$-connected; all vertices in $[n]^2$ within $\ell_1$-distance at most $32mk^2$ from $U$ are $m$-good (or active); and $U$ contains an active set $S$ which is a translate of $S^k_m(0,0)$. Then, eventually $U$ becomes active in the $r$-majority bootstrap percolation process.
\end{lem}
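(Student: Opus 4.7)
The plan is to extend the active region one $\ell_1$-step at a time along a BFS traversal of $U$, repeatedly invoking Lemma~\ref{lem:growcloud} to ``shift'' an active translate of $S^k_m(0,0)$ to a translate centred at an adjacent vertex. The key sub-claim I would prove is: if $u\in[n]^2$ is such that $u+S^k_m(0,0)$ is active and every vertex of $u+S^k_m(2,0)$ is $m$-good or already active, then for every $\ell_1$-neighbour $u'$ of $u$, the set $u'+S^k_m(0,0)$ eventually becomes active too.

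To prove the sub-claim I would apply Lemma~\ref{lem:growcloud} twice in succession (with $b=0$ and $a=0$, then $a=1$) to enlarge $u+S^k_m(0,0)$ into $u+S^k_m(2,0)$, and then verify the set-theoretic inclusion
\[
w + S^k_m(0,0) \;\subseteq\; S^k_m(2,0) \qquad \text{for every unit vector } w\in\{(\pm1,0),(0,\pm1)\}
\]
by a row-by-row check using the recursion~\eqref{eq:x_i}. The crucial computation is that, for $|i|\le m+1$, the horizontal radii of $S^k_m(0,0)$ and $S^k_m(2,0)$ satisfy $x'_i - x_i = 2k$. This absorbs any horizontal unit shift trivially, and it absorbs a vertical unit shift because the cumulative ``row jump'' of the form $(j-1)\lceil k/m\rceil$ introduced by re-indexing is at most $k+m-1<2k$ by the assumption $m<k$.

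Granting the sub-claim, I would root a BFS tree of $U$ (which exists since $U$ is $\ell_1$-connected) at the centre $u_0$ of $S$, so that $S=u_0+S^k_m(0,0)$ is active. By induction on BFS-depth, every $v\in U$ has an $\ell_1$-path $u_0,u_1,\ldots,u_\ell=v$ contained in $U$, and applying the sub-claim to each edge $u_ju_{j+1}$ shows that $u_{j+1}+S^k_m(0,0)$ eventually becomes active; since $(0,0)\in S^k_m(0,0)$, the vertex $v=u_\ell$ itself is eventually active. The good-or-active hypothesis needed by the sub-claim at step $u_j$ is met because, by~\eqref{eq:cloudin}, the set $u_j+S^k_m(2,0)$ lies within $\ell_1$-distance $(m+2)k+m+3\le 7mk$ of $u_j\in U$, which is well under the buffer $32mk^2$ provided by the hypothesis of the lemma.

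The main obstacle is the verification of the containment $w+S^k_m(0,0)\subseteq S^k_m(2,0)$, which requires unwinding the recursion~\eqref{eq:x_i} and exploiting $m<k$ to show that two applications of Lemma~\ref{lem:growcloud} already suffice to absorb any unit translation; the rest of the argument is then a routine induction along the BFS tree of $U$.
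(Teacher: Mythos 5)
Your proof is correct and succeeds, and it shares the same basic skeleton as the paper's (iterate Lemma~\ref{lem:growcloud} to enlarge the active ``cloud,'' then propagate along $U$ using $\ell_1$-connectivity), but the granularity and the bookkeeping are genuinely different. The paper grows a single active translate of $S^k_m(0,0)$ all the way to $S^k_m(14mk,0)$, observes via~\eqref{eq:cloudout} that the result contains the square $[-7mk,7mk]^2$, and uses this to tile $U$ with translates of the square $Q=[-2mk,2mk]^2$, jumping by $4mk+1$ at a time; this large jump forces the paper to relax the seed condition from ``$S\subseteq U$'' to ``$Q\cap U\ne\emptyset$'' when iterating. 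Your argument instead applies Lemma~\ref{lem:growcloud} only twice, growing to $S^k_m(2,0)$, and then shows directly that a unit $\ell_1$-translate of $S^k_m(0,0)$ is already contained in $S^k_m(2,0)$; propagation is then a BFS over $U$ one vertex at a time, which dispenses with the ``intersecting $Q$'' relaxation since the relevant growth set $u_j+S^k_m(2,0)$ always sits within a small $\ell_1$-ball around $u_j\in U$. The price of your finer step is a row-by-row containment verification: the computation $x'_i-x_i=2k$ (comparing $S^k_m(2,0)$ to $S^k_m(0,0)$ on the common rows) and the estimate $x_{j-1}-x_j\le (m-1)\lceil k/m\rceil<k+m-1<2k$ (from $m<k$) are both correct and carry the argument; the paper avoids this computation by relying on the pre-established geometric fact~\eqref{eq:cloudout}. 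Both proofs use only a small fraction of the $32mk^2$ buffer, so the constant is not sharp in either version.
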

\begin{proof}
Without loss of generality, we assume that $S=S^k_m(0,0)$ (by changing the coordinates appropriately). Then, by~\eqref{eq:cloud0in}, $S$ is contained inside the square $Q=[- 2mk, 2mk] \times [- 2mk, 2mk]$.
We weaken our hypothesis that $S\subseteq U$, and only assume that $Q\cap U\ne\emptyset$.
Let $S'=S^k_m(14mk,0)$.
By~\eqref{eq:cloudin}, $S' \subseteq [-15mk^2,15mk^2]\times[-15mk^2,15mk^2]$. Therefore, every vertex in $S'$ must lie within $\ell_1$-distance $30mk^2+4mk \le 32mk^2$ from $U$, and thus must be good (or already active).
We repeatedly apply Lemma~\ref{lem:growcloud} and conclude that $S'$ eventually becomes active.
By~\eqref{eq:cloudout},  $S^k_m(14mk,0) \supseteq [-7mk,7mk]^2$, so $S'$ contains not only the square $Q$, but all $8$ translated copies of $Q$ around it. More precisely, for every $i,j\in\{-1,0,1\}$,
\[
S' \supseteq Q_{ij}, \qquad \text{where}\qquad Q_{ij} = (4mk+1)(i,j) +  Q.
\]
Hence, all nine squares $Q_{ij}$ eventually become active.

Note that, for any $x,y\in\ent$, the translate $\hat Q = (x,y)+Q$ contains $\hat S = (x,y)+S$.
Therefore,  if $\hat Q$ is active and intersects $U$, the argument above shows that all nine squares
\[
\hat Q_{ij} = (4mk+1)(i,j) +  \hat Q
\]
eventually become active as well.
We may iteratively repeat the same argument to any active translate of $Q$ that intersects $U$.
Since $U$ is $\ell_1$-connected, we can find a collection of translates of $Q$ that eventually become active and whose union contains $U$. This finishes the proof of the lemma.
\end{proof}
\subsection{The $t$-tessellation}\label{ssec:tessellation}
Given any integer $1\le t\le n$, we define the $t$-tessellation $\cT(n,t)$ of $[n^2]$ to be the partition of $[n]^2$ into cells
\[
C_{ij} = [a_i+1,a_{i+1}] \times [a_j+1,a_{j+1}], \qquad  0 \le i,j \le \lfloor n/t\rfloor - 1,
\]
where $a_i=it$ for $0\le i\le \lfloor n/t\rfloor - 1$ and $a_{\lfloor n/t\rfloor} = n$. Most cells in $\cT(n,t)$ are squares with $t$ vertices on each side, except for possibly those cells on the last row or column if $t\nmid n$. These exceptional cells are in general rectangles, and have between $t$ and $2t$ vertices on each side.

We may regard the set of cells $\cT(n,t)$ of the $t$-tessellation as the vertex set of either $\sL_1(\lfloor n/t\rfloor)$ or $\sL_\infty(\lfloor n/t\rfloor)$ (that is, $\cT(n,t) \simeq \big[ \lfloor n/t\rfloor \big]^2$) by identifying each cell $C_{ij}\in \cT(n,t)$ with $(i,j)\in \big[ \lfloor n/t\rfloor \big]^2$. Call each of the resulting graphs $\sL_1(n,t)$ and $\sL_\infty(n,t)$, respectively. In other words, the vertices of $\sL_1(n,t)$ are precisely the cells in $\cT(n,t) \simeq \big[ \lfloor n/t\rfloor \big]^2$, and each cell is adjacent to its neighbouring cells at the top, bottom, left and right (in a toroidal sense); and a similar description (adding the top-right, top-left, bottom-right and bottom-left cells to the neighbourhood) holds for $\sL_\infty(n,t)$. To avoid confusion, we always call the vertices of $\sL_1(n,t) \simeq \sL_1(\lfloor n/t\rfloor)$ and $\sL_\infty(n,t) \simeq \sL_\infty(\lfloor n/t\rfloor)$ cells, and reserve the word vertex for the original graph $\sL(n,k)$.

For $i\in\{1,\infty\}$, we say that a set of cells $\cZ\subseteq\cT(n,t)$ is \emph{$\ell_i$-connected}, if $\cZ$ induces a connected subgraph of $\sL_i(n,t)$. Also, the $\ell_i$-distance between two cells $C$ and $C'$ corresponds to their graph distance in the graph of cells $\sL_i(n,t)$. This should not be confused with the $\ell_i$-distance (in $\sL_i(n)$) between the vertices inside $C$ and $C'$. Sometimes, we will also refer to the $\ell_i$-distance between a vertex $v$ and a cell $C$. By this, we mean the minimum distance in $\sL_i(n)$ between $v$ and any vertex $u\in C$.

Given $1\le m\le k$, we say that a cell $C\in\cT(n,t)$ is \emph{$m$-good} (or simply  \emph{good}) if every vertex inside or within $\ell_1$-distance $32mk^2$ of $C$ is good or active. Otherwise, we call it \emph{bad}. Note that deciding whether a cell $C$ is good or bad only depends on the status of the vertices inside or within $\ell_1$-distance $32mk^2 + k+1$ from $C$.
We call a cell a \emph{seed} if it contains an active translate of $S^k_m(0,0)$. (By~\eqref{eq:cloud0in}, this definition is not vacuous if $t\ge4mk+1$.)

In view of all these definitions, Lemma~\ref{lem:growconn} directly implies the following corollary.
\begin{cor}\label{cor:growcells}
Let $k,m,r,t\in\ent$ satisfying $1\le m < k$,  $r\le \lceil k/m\rceil$ and $1\le t\le n$. Suppose that $\cZ$ is an $\ell_1$-connected set of cells in $\cT(n,t)$ such that all cells in $\cZ$ are $m$-good and $\cZ$ contains a seed.
Then, in the $r$-majority bootstrap percolation process, eventually all cells in $\cZ$ become active.
\end{cor}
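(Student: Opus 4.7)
The plan is to derive the corollary as a direct unpacking of Lemma~\ref{lem:growconn} applied to the vertex set $U := \bigcup_{C \in \cZ} C \subseteq [n]^2$. I need to verify the three hypotheses of Lemma~\ref{lem:growconn}: that $U$ is $\ell_1$-connected, that every vertex within $\ell_1$-distance at most $32mk^2$ from $U$ is $m$-good or already active, and that $U$ contains an active translate of $S^k_m(0,0)$.

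The first step is $\ell_1$-connectedness of $U$. Each cell $C_{ij} = [a_i+1,a_{i+1}]\times[a_j+1,a_{j+1}]$ is a rectangle of consecutive lattice points, hence visibly $\ell_1$-connected in $\sL_1(n)$. Two cells that are neighbours in $\sL_1(n,t)$ share a full row or column of vertices at $\ell_1$-distance $1$. Combining these two observations with the assumption that $\cZ$ is $\ell_1$-connected as a set of cells gives $\ell_1$-connectedness of $U$ in $\sL_1(n)$.

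The second step is the goodness condition. By definition of an $m$-good cell, each $C \in \cZ$ has the property that every vertex inside or within $\ell_1$-distance $32mk^2$ of $C$ is good or active. Since every vertex at $\ell_1$-distance at most $32mk^2$ from $U$ is at that distance from some cell $C \in \cZ$, the required condition is immediate.

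The third step is the seed condition: by hypothesis $\cZ$ contains a seed cell $C^*$, which by definition contains an active translate of $S^k_m(0,0)$; since $C^* \subseteq U$, $U$ contains such a translate. An application of Lemma~\ref{lem:growconn} (noting that its hypotheses on $m, k, r$ coincide with ours) then yields that $U$ eventually becomes active in the $r$-majority bootstrap percolation process, which is precisely the statement that every cell in $\cZ$ becomes active. Since the corollary is essentially a restatement of the lemma in the coarser language of the tessellation, I do not anticipate any real obstacle; the only thing to be slightly careful about is the translation between cell-level and vertex-level $\ell_1$-connectivity, which is straightforward from the definition of $\sL_1(n,t)$.
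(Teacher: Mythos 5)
Your proposal is correct and is essentially the same argument the paper has in mind: the paper simply notes that Corollary~\ref{cor:growcells} is a direct consequence of Lemma~\ref{lem:growconn} once the cell-level hypotheses (goodness, seed, $\ell_1$-connectedness of $\cZ$) are unpacked into the vertex-level hypotheses on $U=\bigcup_{C\in\cZ}C$, and your three verification steps do precisely that unpacking.
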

%
%
%
%
\section{Percolative ingredients}\label{sec:perco}
In this section, we consider the $t$-tessellation $\cT(n,t)$ defined in Section~\ref{ssec:tessellation} for an appropriate choice of $t$. We combine the deterministic results in Section~\ref{sec:deterministic} together with some percolation techniques to conclude that eventually most cells in $\cT(n,t)$ (and thus most vertices in $\sL(n,k)$) will eventually become active a.a.s. This corresponds to Phase 1 described in the introduction.

Throughout the section, we define $\nn=\lfloor n/t\rfloor$ and assume that $\nn\to\infty$ as $n\to\infty$. We identify the set of cells $\cT(n,t)$ with $[\nn]^2$ in the terms described in Section~\ref{ssec:tessellation}, and consider the graphs of cells $\sL_1(n,t) \simeq \sL_1(\nn)$ and  $\sL_\infty(n,t) \simeq \sL_\infty(\nn)$. Recall (for $i\in\{1,\infty\}$) the definitions of $\ell_i$-connected sets of cells and $\ell_i$-distance between cells from that section. Moreover,  define an $\ell_i$-path of cells to be a path in the graph $\sL_i(\nn)$, and the $\ell_i$-diameter of an $\ell_i$-connected set of cells $\cZ$ to be the maximal $\ell_i$-distance between two cells $C,C'\in\cZ$. (The $\ell_i$-diameter of $\cZ$ is also denoted $\diam_{\ell_i}\cZ$.) Finally, given a set of cells $\cZ$, an $\ell_i$-component of $\cZ$ is a subset $\cC\subseteq\cZ$ that induces a connected component of the subgraph of $\sL_i(\nn)$ induced by $\cZ$.

We need one more definition to characterize very large sets of cells that ``spread almost everywhere'' in $[\nn]^2$.
Set $A=10^8$ hereafter. Given any $\varepsilon=\varepsilon(\nn) \in (0,1)$ and a set of cells $\cZ\subseteq[\nn]^2$, we say that $\cZ$ is \emph{$\varepsilon$-ubiquitous} if it satisfies the following properties:
\begin{itemize}
\item[(i)]
$\cZ$ is an $\ell_1$-connected set of cells;
\item[(ii)]
$|\cZ| \ge (1-A\varepsilon)\nn^2$; and
\item[(iii)]
given any collection $\cB_1,\cB_2,\ldots,\cB_j$ of disjoint $\ell_\infty$-connected non-empty subsets of $[\nn]^2\setminus \cZ$,
\begin{equation}\label{eq:diams}
\min_{1\le i\le j} \big\{ \diam_{\ell_\infty} \cB_i \big\} \le \frac{A}{\log(1/\varepsilon)}\log\left(\nn^2/j\right).
\end{equation}
\end{itemize}
In particular, (iii) implies that
\begin{itemize}
\item[(iv)]
every $\ell_\infty$-connected set of cells $\cB\subseteq [\nn]^2\setminus \cZ$ has $\ell_\infty$-diameter at most $\frac{A}{\log(1/\varepsilon)}\log(\nn^2)$.
\end{itemize}
Our goal for this section is to show that a.a.s.\ there is an $\varepsilon$-ubiquitous set of cells that eventually become active. As a first step towards this, we adapt some ideas from percolation theory to find an $\varepsilon$-ubiquitous set of good cells in $[\nn]^2$. We formulate this in terms of a slightly more general context. A \emph{$2$-dependent site-percolation model} on $\sL_1(\nn)$ is any probability space defined by the state (good or bad) of the cells in $[\nn]^2$ such that the state of each cell $C$ is independent from the state of all other cells at $\ell_1$-distance at least $3$ from $C$. We represent such probability space by means of the random vector $\bX=(X_C)_{C\in[\nn]^2}$, where $X_C$ is the indicator function of the event that a cell $C$ is good.
In this setting, let $\cG=\{C\in[\nn]^2 : X_C=1\}$ be the set of all good cells, and let $\cG_0$ be the largest $\ell_1$-component of $\cG$ (if $\cG$ has more than one $\ell_1$-component of maximal size, pick one by any fixed deterministic rule).
\begin{lem}\label{lem:perco}
Let $\epsilon_0>0$ be a sufficiently small constant. Given any $\epsilon=\epsilon(\nn)$ satisfying $ \nn^{-1/3}   <\epsilon\le \epsilon_0$, consider a $2$-dependent site-percolation model $\bX$ on $\sL_1(\nn)$, where each cell in $[\nn]^2$ is good with probability at least $1-\epsilon$.
Then, a.a.s.\ as $\nn\to\infty$, the largest $\ell_1$-component $\cG_0$ of the set of good cells is $\epsilon$-ubiquitous.
\end{lem}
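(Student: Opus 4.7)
The plan is to carry out a Peierls-type contour analysis adapted to the $2$-dependent setting. Two combinatorial tools are used throughout: (a) by a standard spanning-tree argument, the number of $\ell_\infty$-connected subsets of $\sL_\infty(\nn)$ of size $s$ containing a fixed cell is at most $c_1^s$ for some absolute constant $c_1$; (b) for any $\ell_\infty$-connected set $S$ of cells, a greedy selection at pairwise $\ell_1$-distance $\ge 3$ extracts an independent subfamily of $S$ of size at least $|S|/13$ (since each chosen cell forbids at most $12$ further candidates within $\ell_1$-distance $2$), so that by $2$-dependence $\pr[S\subseteq[\nn]^2\setminus\cG]\le\epsilon^{|S|/13}$. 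Once $\epsilon_0$ is small, I will use the simplification $(c_1\epsilon^{1/13})^{s}\le\epsilon^{s/14}$. Two initial consequences follow: first, a union bound gives that a.a.s.\ every $\ell_\infty$-component of $\cB:=[\nn]^2\setminus\cG$ has diameter at most $D:=\frac{2A\log\nn}{\log(1/\epsilon)}$; second, a Chernoff-type inequality for $2$-dependent Bernoulli sums (split $[\nn]^2$ into $O(1)$ mutually independent sublattices) gives $|\cB|\le 2\epsilon\nn^2$ a.a.s., the hypothesis $\epsilon\ge\nn^{-1/3}$ ensuring $\epsilon\nn^2\to\infty$. Property (i) is immediate since $\cG_0$ is defined as an $\ell_1$-component.

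For property (ii), since $D\ll\nn$, no $\ell_\infty$-component of $\cB$ can wrap around the torus, so the good cells split into a unique exterior $\ell_1$-component $\cG_0$ plus finitely many bounded pieces each enclosed in the interior of some $\ell_\infty$-component of $\cB$. Writing $B_v$ for the $\ell_\infty$-component of $\cB$ containing $v$ and $I(B)$ for the enclosed region of a bad component $B$, isoperimetry in $\ent^2$ gives $|I(B)|\le|B|^2$, hence
\[
|\cG\setminus\cG_0|\le\sum_B|B|^2=\sum_{v\in\cB}|B_v|.
\]
From (a) and (b), a direct computation yields $\ex[|B_v|\mathbf{1}_{v\in\cB}]=O(\epsilon)$, so the expected value of this sum is $O(\epsilon\nn^2)$. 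A Chebyshev / second-moment argument -- using that $|B_v|\le D^2$ a.a.s.\ and that $2$-dependence makes contributions from cells at $\ell_\infty$-distance $\gg D$ essentially uncorrelated -- upgrades this to $|\cG\setminus\cG_0|=O(\epsilon\nn^2)$ a.a.s. Combined with $|\cB|\le 2\epsilon\nn^2$, this delivers $|\cG_0|\ge(1-A\epsilon)\nn^2$ for $A=10^8$.

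For property (iii), fix $j\ge 1$, set $d=d(j)=\frac{A}{\log(1/\epsilon)}\log(\nn^2/j)$, and write $E_j$ for the event that there exist $j$ disjoint $\ell_\infty$-connected subsets of $[\nn]^2\setminus\cG_0$, each of diameter $>d$. An ordered count over tuples of sizes $s_1,\ldots,s_j\ge d+1$, combined with (a) and (b), yields
\[
\pr[E_j]\le\frac{1}{j!}\Biggl(\sum_{s\ge d+1}\frac{\nn^2(c_1\epsilon^{1/13})^s}{s}\Biggr)^{\!j}\le\Bigl(2e\,(j/\nn^2)^{A/14-1}\Bigr)^j,
\]
after substituting $\epsilon^{d/14}=(j/\nn^2)^{A/14}$ and $j!\ge(j/e)^j$. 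For $j\le\nn$ the bound is at most $(2e\nn^{-(A/14-1)})^j$ and sums to $O(\nn^{-(A/14-1)})$; for $\nn<j\le\nn^2/2$ the base $q=2e/2^{A/14-1}$ is a constant strictly less than $1$ (thanks to $A=10^8$), and the geometric sum is at most $q^{\nn}=o(1)$; finally $j>\nn^2/2$ is ruled out by property (ii), which forces $|[\nn]^2\setminus\cG_0|\le A\epsilon\nn^2\ll\nn^2/2$ once $\epsilon_0$ is sufficiently small. A short extra step extends the counting from subsets of $\cB$ to subsets of $[\nn]^2\setminus\cG_0$, using that each cell of $\cG\setminus\cG_0$ lies within $\ell_\infty$-distance $D$ of $\cB$, so that diameters of $\ell_\infty$-components of $[\nn]^2\setminus\cG_0$ are at most a constant factor larger than those of $\cB$ (the factor absorbed in $A$).

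The main obstacle is the concentration step inside property (ii): the naive deterministic bound $|\cG\setminus\cG_0|\le|\cB|\cdot D^2$ is too weak when $\epsilon$ is a small fixed constant (because $D\to\infty$ like $\log\nn/\log(1/\epsilon)$), so one must genuinely exploit the exponential decay of the bad $\ell_\infty$-cluster sizes together with the $2$-dependence to control the variance of $\sum_v|B_v|\mathbf{1}_{v\in\cB}$ and upgrade the expectation bound to an a.a.s.\ estimate. The rest of the proof is a routine combination of polyomino counting with Peierls-style union bounds.
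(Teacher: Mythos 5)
Your proposal takes a genuinely different and in some respects cleaner route than the paper. Where the paper establishes Claim~1 (all $\ell_\infty$-components of $\bcG_0$ have diameter at most $\nn/2$) by invoking the Liggett--Schonmann--Stacey domination theorem and the Deuschel--Pisztora crossing result, you obtain a much stronger bound (diameter $O(\log\nn/\log(1/\epsilon))$) by a direct Peierls argument: polyomino counting $(\text{a})$ plus a greedy independent $1/13$-fraction $(\text{b})$. For property~(ii) you rely on an isoperimetric step $|I(B)|\le|B|^2$ and a first/second-moment argument on $\sum_v|B_v|\mathbf{1}_{v\in\cB}$, which is sound (though the topological ``unique exterior component'' step and the variance control need to be written out carefully; the latter is feasible because $\epsilon\ge\nn^{-1/3}$ while all bad clusters have diameter $O(\log\nn)$). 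These parts are correct and elementary, at the cost of a larger constant $A$.

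However, property~(iii) as you have set it up has a genuine gap, and the ``short extra step'' does not close it. Your bound $\pr[E_j]\le\frac{1}{j!}\bigl(\sum_{s\ge d+1}\nn^2(c_1\epsilon^{1/13})^s/s\bigr)^j$ estimates the probability that there exist $j$ disjoint \emph{bad} $\ell_\infty$-connected sets of diameter $>d$. But $E_j$ concerns disjoint $\ell_\infty$-connected subsets of $[\nn]^2\setminus\cG_0$, and such a subset can consist \emph{entirely of good cells} (e.g.\ a long row of enclosed good cells inside a single bad ring). Worse, many of the $\cB_i$ can sit inside the \emph{same} $\bcG_0$-component and hence share the same enclosing bad cluster, so the map ``$\cB_i\mapsto$ an associated bad set'' is far from injective, and your ordered count over tuples of bad sets simply does not upper-bound $\pr[E_j]$. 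Remarking that each cell of $\cG\setminus\cG_0$ is within $\ell_\infty$-distance $D$ of $\cB$, or that components of $\bcG_0$ and their enclosing bad components have comparable diameter, does not repair this multiplicity problem. The correct way through (which the paper carries out) is to control the random quantity $N'_d$, the number of \emph{cells} lying in $\bcG_0$-components of $\ell_\infty$-diameter at least $d$: if all $j$ sets had diameter $>d$, then $N'_{d}\ge j$, so it suffices to show $N'_d< j$ for $d=d(j)$, and this follows from a first-moment bound $\ex N'_d \lesssim \nn^2 \epsilon^{cd}$ together with Markov/Chebyshev. Your machinery from~(ii) --- $N'_d\le\sum_{B:\diam(B)\ge d}(\diam(B)+1)^2\le\sum_v|B_v|\mathbf{1}\{v\in\cB,\,\diam(B_v)\ge d\}$ plus the polyomino/greedy estimate --- gives exactly this; what does not work is the tuple-union-bound route you wrote for~(iii).
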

\begin{proof}
Throughout the argument, we assume that $\epsilon_0$ is sufficiently small so that $\epsilon$ meets all the conditions required.
Let $\bcG_0 = [\nn]^2\setminus\cG_0$. Our first goal is to show the following claim.
\begin{claim} \label{claim:crossing}
A.a.s.\ every $\ell_\infty$-component of $\bcG_0$ has $\ell_\infty$-diameter at most $\nn/2$.
\end{claim}
For this purpose, we will use a classical result by Liggett, Schonmann, and Stacey (cf.~Theorem~0.0 in~\cite{LSS97}) that compares $\bX$ with the product measure. Given a constant $0<p_0<1$ (sufficiently close to $1$), consider $\bhX=(\hX_C)_{C\in[\nn]^2}$, in which the $\hX_C$ are independent indicator variables satisfying $\pr(\hX_C=1)=p_0$, and define $\chG = \{C\in[\nn]^2 : \hX_C=1\}$. If $\epsilon_0$ (and thus $\epsilon$) is small enough given $p_0$, then our 2-dependent site-percolation model $\bX$ stochastically dominates $\bhX$, that is, $\ex(F(\cG)) \ge \ex(F(\chG))$ for every non-decreasing function $F$ over the power set $2^{[\nn]^2}$ (i.e.~satisfying $F(\cZ)\le F(\cZ')$ for every $\cZ\subseteq\cZ'\subseteq[\nn]^2$).

Set $s = \lfloor\nn/4\rfloor$ and, for $i,j\in\{0,1,2,3,4\}$, consider the rectangles (in $\ent^2$)
\[
\cR_{i,j} = (is, js) + [1, s] \times [1, 2s]
\quad\text{and}\quad
\cR'_{i,j} = (is, js) + [1, 2s] \times [1, s].
\]
We regard $\cR_{i,j}$ and  $\cR'_{i,j}$ as subsets of the torus $[\nn]^2$ by interpreting their coordinates modulo $n$. Note that, if $4\mid \nn$ then some of these rectangles are repeated (e.g.~$\cR_{0,0}=\cR_{4,0}$), but this does not pose any problem for our argument. Let $\cR$ be any of the rectangles above and $\cZ\subseteq[\nn]^2$ be any set of cells. We say that $\cZ$ is $\ell_1$-crossing for $\cR$ if the set $\cZ\cap\cR$ has some $\ell_1$-component intersecting the four sides of $\cR$. It is easy to verify that if $\cZ$ is $\ell_1$-crossing for all $\cR_{i,j}$ and all $\cR'_{i,j}$, then every $\ell_\infty$-component of $[\nn]^2\setminus\cZ$ has $\ell_\infty$-diameter at most $2s \le \nn/2$. 
If $p_0$ is sufficiently close to $1$, by applying a result by Deuschel and Pisztora (cf.~Theorem~1.1 in~\cite{DP96}) to all $\cR_{i,j}$ and all $\cR'_{i,j}$, we conclude that a.a.s.\ $\chG$ contains an  $\ell_1$-component with more than $\nn^2/2$ cells which is $\ell_1$-crossing for all $\cR_{i,j}$ and all $\cR'_{i,j}$. This is a non-decreasing event, and hence a.a.s.\ $\cG$ has an $\ell_1$-component with exactly the same properties (which must be $\cG_0$ by its size). This implies the claim.

In view of Claim~\ref{claim:crossing}, we will restrict our focus to  $\ell_\infty$-components of $\bcG_0$ of small  $\ell_\infty$-diameter.
Let $N_d$ be the number of cells that belong to $\ell_\infty$-components of $\bcG_0$ of $\ell_\infty$-diameter $d$. Then, the following holds.
\begin{claim} \label{claim:Nmoments}
For every $0\le d\le \nn/2$,
\[
\ex N_d \le B \nn^2 \epsilon^{\lceil (d+ 1 )/4\rceil}
\quad (B=10^6)
\qquad\text{and}\qquad
\var N_d \le  (4d+ 5)^2\ex N_d.
\]
\end{claim}
In order to prove this claim, we need one definition.
A \emph{special sequence} of length $j$ is a sequence of $j+1$ different cells $C_0,C_1,\ldots,C_j$ in $[\nn]^2$ such that any two consecutive cells in the sequence are at $\ell_\infty$-distance exactly $3$, and any two different cells are at $\ell_\infty$-distance at least $3$. Observe that there are at most $24^j$ special sequences of length $j$ starting at a given cell $C_0$. Moreover, by construction, the states (good or bad) of the cells in a special sequence are mutually independent.

We now proceed to the proof of Claim~\ref{claim:Nmoments}. Let $\cB$ be an $\ell_\infty$-component of $\bcG_0$ of $\ell_\infty$-diameter $0\le d\le \nn/2$, and let $\cF$ be the set of cells inside $\cB$ but at $\ell_1$-distance $1$ of some cell in $\cG_0$. $\cF$ is $\ell_\infty$-connected (since $\sL_1(\nn)$ and $\sL_\infty(\nn)$ are dual lattices) and only contains bad cells. Moreover, $\cF$ must contain two cells $C$ and $C'$ at $\ell_\infty$-distance $d$ (with $C=C'$ if and only if $d=0$). Let $P=C_1,C_2,\ldots,C_m$ be a path joining $C=C_0$ and $C'=C_m$ in the subgraph of $\sL_\infty(\nn)$ induced by $\cF$. From this path, we construct a special sequence $Q=D_0,D_1,\ldots,D_{\lfloor d/3 \rfloor}$ as follows. Set $D_0=C_0$ and, for $1\le i\le \lfloor d/3 \rfloor$, $D_i=C_{j+1}$, where $C_j$ is the last cell in $P$ at $\ell_\infty$-distance at most $2$ from $D_{i-1}$. By construction, $Q$ is a special sequence of length $\lfloor d/3 \rfloor$ contained in $\cB$ and it consists of only bad cells.
Therefore, if any given cell $D\in[\nn]^2$ belongs to an $\ell_\infty$-component of $\bcG_0$ of $\ell_\infty$-diameter $d$, then there must be a special sequence of bad cells and length $\lfloor d/3 \rfloor$ starting within $\ell_\infty$-distance $d$ from $D$. This happens with probability at most
\[
(2d+1)^2 24^{\lfloor d/3 \rfloor} \epsilon^{1+\lfloor d/3 \rfloor} \le B \epsilon^{\lceil (d+1)/4 \rceil},
\]
where it is straightforward to verify that the last inequality holds for $B=10^6$ and all $d$, as long as $\epsilon_0$ is sufficiently small. Summing over all $\nn^2$ cells, we get the desired upper bound on $\ex N_d$.
To bound the variance, we consider separately pairs of cells that are within $\ell_\infty$-distance greater than $2d + 2$ and at most $2d + 2$, and we get
\[
\ex({N_d}^2) \le (\ex N_d)^2 + (4d+ 5)^2\ex N_d,
\]
so
\[
\var N_d \le  (4d+ 5)^2\ex N_d.
\]
This proves Claim~\ref{claim:Nmoments}. Next, let $N'_d=\sum_{i\ge d}N_i$ be the number of cells that belong to $\ell_\infty$-components of $\bcG_0$ of $\ell_\infty$-diameter at least $d$. Then, we have the next claim.
\begin{claim}\label{claim:Nprime}
A.a.s.\ for every $d\ge0$, $N'_d < B' \nn^2 \epsilon^{ \lceil (d+1)/5 \rceil }$, where $B'=11B$.
\end{claim}
Suppose first that $\ex N_d \ge \nn^{1/2}$. By Claim~\ref{claim:Nmoments}, we must have
$(1/\epsilon)^{\lceil (d+1)/4\rceil} \le B \nn^{3/2}$, so in particular $d \le \log \nn$. Then, using Chebyshev's inequality and the bounds in Claim~\ref{claim:Nmoments},
\begin{equation}\label{eq:ChebyNd}
\pr \left(N_d\ge2\ex N_d \right) \le \frac{\var N_d}{(\ex N_d)^2} \le \frac{(4d+5)^2}{\ex N_d} \le \frac{25\log^2\nn}{\nn^{1/2}}.
\end{equation}
Summing the probabilities over all $0\le d\le \log\nn$, the probability is still $o(1)$. Suppose otherwise that $\ex N_d \le  \nn^{1/2}$.
By Markov's inequality,
\begin{equation}\label{eq:MarkNd}
\pr\left(N_d \ge \nn^2\epsilon^{\lceil (d+1)/5\rceil}\right)
\le \frac{\ex N_d}{\nn^2 \epsilon^{\lceil (d+1)/5 \rceil}}.
\end{equation}
Recall from Claim~\ref{claim:Nmoments} and our assumptions that $\ex N_d \le \min\left\{ \nn^{1/2}, B \nn^2 \epsilon^{\lceil (d+1)/4\rceil} \right\}$.
If $\nn^{1/2} \le B \nn^2\epsilon^{\lceil (d+1)/4\rceil}$, then~\eqref{eq:MarkNd} becomes
\[
\pr\left(N_d \ge \nn^2\epsilon^{\lceil (d+1)/5\rceil}\right) \le \frac{1}{\nn^{3/2} \epsilon^{\lceil (d+1)/5 \rceil}}.
\]
For $0\le d \le 15$, the bound above is $o(1)$ as long as say $\epsilon \ge \nn^{-1/3}$.
For $d\ge16$, we have $\lceil (d+1)/5 \rceil + (d+1)/100 \le 0.95 \lceil (d+1)/4 \rceil$, and therefore
\[
\pr\left(N_d \ge \nn^2\epsilon^{\lceil (d+1)/5\rceil}\right)
\le \frac{1}{\nn^{3/2} \epsilon^{\lceil (d+1)/5 \rceil}}
\le \frac{\epsilon^{(d+1)/100}}{\nn^{3/2} \epsilon^{0.95\lceil (d+1)/4 \rceil}}
\le \frac{B^{0.95}  \epsilon^{(d+1)/100}}{\nn^{0.075 }},
\]
where for the last step we used that $(1/\epsilon)^{\lceil (d+1)/4\rceil} \le B \nn^{3/2}$. Summing the bound above over all $d\ge16$ gives again a contribution of $o(1)$.
Finally, if $B \nn^2\epsilon^{\lceil (d+1)/4\rceil} \le \nn^{1/2} $, then we must have $d\ge16$ since $\epsilon \ge \nn^{-1/3}$. Therefore~\eqref{eq:MarkNd} becomes
\[
\pr\left(N_d \ge \nn^2\epsilon^{\lceil (d+1)/5\rceil}\right)
\le \frac{B \nn^2\epsilon^{\lceil (d+1)/4\rceil}}{\nn^2 \epsilon^{\lceil (d+1)/5 \rceil}}
\le B \epsilon^{0.05\lceil (d+1)/4\rceil + (d+1)/100}
\le \frac{B^{0.95} \epsilon^{(d+1)/100}}{\nn^{0.075}},
\]
where for the last step we used that $\epsilon^{\lceil (d+1)/4\rceil} \le \nn^{-3/2}/B$. Summing the bound above over all $d\ge16$ gives $o(1)$.
Putting all the previous cases together, we conclude that a.a.s.\ for all $0\le d\le \nn/2$,
\[
N_d \le \max \left\{ \nn^2\epsilon^{\lceil (d+1)/5\rceil}, 2\ex N_d  \right\} \le 2B \nn^2\epsilon^{\lceil (d+1)/5\rceil}.
\]
The same is true for $d\ge\nn/2$ by Claim~\ref{claim:crossing}. Hence, a.a.s.\ for all $d\ge0$,
\[
N'_d = \sum_{i\ge d} N_i \le 2B \nn^2\epsilon^{\lceil (d+1)/5\rceil} \sum_{i\ge0} 5 \epsilon^i
< 11B \nn^2\epsilon^{\lceil (d+1)/5\rceil}.
 \]
This proves Claim~\ref{claim:Nprime}.

Finally, assume that the a.a.s.\ event in Claim~\ref{claim:Nprime} holds. Given any $1\le j\le \nn^2$, set
\[
d = \left\lfloor \frac{5 \log(B' \nn^2/j)}{\log(1/\epsilon)} \right\rfloor.
\]
Then, $\lceil(d+1)/5\rceil \ge \frac{\log(B' \nn^2/j)}{\log(1/\epsilon)}$, and so
\[
N'_d < B' \nn^2 \epsilon^{ \lceil(d+1)/5\rceil} \le  j.  
\]
Therefore, given any disjoint $\ell_\infty$-connected non-empty sets $\cB_1,\cB_2,\ldots,\cB_j \subseteq\bcG_0$ (not necessarily components), at least one of the $j$ sets must have $\ell_\infty$-diameter strictly less than $d$. Hence,
\[
\min_{1\le i\le j} \big\{ \diam_{\ell_\infty} \cB_i \big\} \le d-1
\le \frac{5 \log(\nn^2/j) + 5 \log B'}{\log(1/\epsilon)} - 1
\le \frac{5 \log(\nn^2/j)}{\log(1/\epsilon)}.
\]
This proves part~(iii) of the definition of $\epsilon$-ubiquitous for $\cG_0$. Part~(i) is immediate since $\cG_0$ is $\ell_1$-connected by definition. Finally, since $N'_0 < B' \nn^2 \epsilon$, then $|\cG_0| > \nn^2(1-B'\epsilon)$, which implies part~(ii).
So $\cG_0$ is $\epsilon$-ubiquitous.
\end{proof}
The next result combines Corollary~\ref{cor:growcells} and Lemma~\ref{lem:perco} in order to show that most of the cells become active during Phase~1 of the process.
\begin{prop}\label{prop:phase1}
Let $0<p_0<1$ be a sufficiently small constant. Given any $p=p(n)\in\real$, $k=k(n)\in\nat$ and $r=r(n)\in\ent$ satisfying
(eventually for all $n\in\nat$ sufficiently large)
\begin{equation}\label{eq:pkr}
200\frac{(\log\log n)^{2/3}}{\log^{1/3}n} \le p \le p_0, \qquad
\frac{1000}{p}\log(1/p) \le k \le \frac{p^2\log n}{3000\log(1/p)},
\qquad\text{and}\qquad
r \le pk/9,
\end{equation}
define
\begin{equation}\label{eq:kte}
t = t(n) = 100k^3
\qquad\text{and}\qquad
\varepsilon = \eps(n) = k^{-100}.
\end{equation}
Consider the $r$-majority bootstrap percolation process $\M_r(\sL(n,k); p)$, and the $t$-tessellation $\cT(n,t)$ of $[n]^2$ into $\nn^2 = \lfloor n/t\rfloor^2$ cells.
Then, a.a.s.\ the set of all cells that eventually become active contains an $\varepsilon$-ubiquitous $\ell_1$-component.
\end{prop}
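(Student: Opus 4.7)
My strategy is to combine Lemma~\ref{lem:perco} with Corollary~\ref{cor:growcells}: I will choose $m$ so that almost all cells in $\cT(n,t)$ are $m$-good a.a.s., apply Lemma~\ref{lem:perco} to produce an $\varepsilon$-ubiquitous largest $\ell_1$-component $\cG_0$ of good cells, show that $\cG_0$ a.a.s.\ contains a seed, and then invoke Corollary~\ref{cor:growcells} with $\cZ = \cG_0$.

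I would set $m = \lceil 9/p \rceil$, which ensures $\lceil k/m\rceil \ge pk/9 \ge r$ as required by Corollary~\ref{cor:growcells}. For any vertex $v$, each of the four $k$-vertex quadrants in the $m$-good test contains $\Bin(k,p)$ active vertices with mean $pk \ge 1000\log(1/p)$, while the threshold $2\lceil k/m\rceil$ is at most roughly $pk/4$. A Chernoff bound gives $\pr[v\text{ is not }m\text{-good}] \le e^{-\Omega(pk)} \le p^{C_1}$ for a large constant $C_1$, which the parameter bounds (forcing $\log k = O(\log(1/p))$) upgrade to $k^{-C_2}$ for any chosen $C_2$. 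Since the $m$-goodness of a cell $C$ depends only on vertices within $\ell_1$-distance $32mk^2 + k + 1 = O(k^2/p)$, which is much smaller than $t = 100k^3$ because $pk$ is large, a union bound over the $O(k^6)$ such vertices yields $\pr[C \text{ is not }m\text{-good}] \le k^{-100} = \varepsilon$; moreover, cells at cell-$\ell_1$-distance at least $2$ in $\sL_1(\nn)$ are independent, giving a $2$-dependent site-percolation model. Verifying $\nn^{-1/3} < \varepsilon \le \varepsilon_0$ from the parameter bounds, Lemma~\ref{lem:perco} yields a.a.s.\ an $\varepsilon$-ubiquitous largest $\ell_1$-component $\cG_0$ of $m$-good cells.

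The remaining task is to show that $\cG_0$ a.a.s.\ contains a seed. I would tile $[n]^2$ with $M = \Theta(\nn^2)$ pairwise disjoint translates $T_1,\ldots,T_M$ of $S = S^k_m(0,0)$, placed so that each $T_i$ lies well inside a unique cell $C_i$. Since $|S| = O(m^2 k) = O(k/p^2)$, the upper bound $k \le p^2\log n/(3000\log(1/p))$ forces $|S|\log(1/p) \le c\log n$ for a small constant $c$, and hence $Mp^{|S|}$ grows polynomially in $n$. The events $\{T_i\text{ all-active}\}$ being mutually independent Bernoulli trials with probability $p^{|S|}$, Chernoff shows that a.a.s.\ the number $W$ of all-active translates is $(1+o(1))Mp^{|S|}$, vastly exceeding the a.a.s.\ upper bound $A\varepsilon\nn^2$ on $|[\nn]^2\setminus\cG_0|$. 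A combination of monotonicity/FKG (both ``$T_i$ all-active'' and ``$C_i$ is $m$-good'' are increasing in the initial active set) with translation-invariance of the process on the torus should then show that a.a.s.\ at least one all-active translate has $C_i \in \cG_0$, providing the required seed, and Corollary~\ref{cor:growcells} applied to $\cZ = \cG_0$ completes the proof.

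The main obstacle is precisely this last step: the event ``$C_i \in \cG_0$'' is not itself monotone in the initial active set, since activating extra vertices can merge previously disjoint good components and replace $\cG_0$ by a new largest one. A careful decoupling between the local events $\{T_i\text{ all-active}\}$ and the global random object $\cG_0$ is thus needed, for instance by choosing the $T_i$ sparsely enough that their influence on the goodness of nearby cells is negligible, or via a second-moment bound that exploits the near-independence between an individual $T_i$ and the vast majority of the configuration determining $\cG_0$.
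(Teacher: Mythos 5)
Your overall plan matches the paper's: choose $m \asymp 1/p$ so that cells are $m$-good with probability at least $1-\varepsilon$, verify the $2$-dependence and the bounds $\nn^{-1/3} < \varepsilon \le \varepsilon_0$, apply Lemma~\ref{lem:perco} to get an $\varepsilon$-ubiquitous $\cG_0$, show $\cG_0$ a.a.s.\ contains a seed, and close with Corollary~\ref{cor:growcells}. Two points, one minor and one serious.

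\textbf{Minor.} Your choice $m=\lceil 9/p\rceil$ does not in fact guarantee $\lceil k/m\rceil \ge r$: you have $m\ge 9/p$, so $k/m\le pk/9$, and since $pk$ is large, $\lceil k/m\rceil$ can fall below $\lfloor pk/9\rfloor \ge r$. The paper uses $m=\lceil 8/p\rceil$ so that (for $p$ small) $m\le 9/p$, hence $k/m\ge pk/9\ge r$, and also $2\lceil k/m\rceil \le pk/2$ which is what the Chernoff bound needs. You should take $m$ slightly \emph{smaller} than $9/p$, not slightly larger.

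\textbf{Serious gap.} You correctly identify that the last step — showing $\cG_0$ contains a seed — is the obstacle, but you leave it unresolved, merely sketching two possible directions (``sparse placement of the $T_i$'' or ``second moment''). The paper's actual argument is different from either and closes the gap cleanly, so you are missing the key idea. The paper places exactly one candidate seed $S_C$ deep in the interior of each cell $C$, chosen so that the seed indicator $Y_C$ depends only on vertices strictly inside $C$ and at distance more than $32mk^2+k+1$ from every other cell; in particular $(Y_C)_{C\in\cZ}$ is independent of $(X_{C'})_{C'\in\cZ'}$ whenever $\cZ\cap\cZ'=\emptyset$. Then, for any fixed $\ell_1$-connected $\cZ$ with $|\cZ|\ge(1-A\varepsilon)\nn^2$, the event $\{\cG_0=\cZ\}$ is \emph{exactly} $\{X_\cZ\,\bar X_{\partial\cZ}=1\}$ (since $A\varepsilon<1/2$, any $\ell_1$-component containing more than half the cells must be the unique largest one), and one bounds
\[
\pr\big((\bar Y_\cZ=1)\cap(X_\cZ\bar X_{\partial\cZ}=1)\big)
= \pr\big((\bar Y_\cZ=1)\cap(\bar X_{\partial\cZ}=1)\big) - \pr\big((\bar Y_\cZ=1)\cap(X_\cZ=0)\cap(\bar X_{\partial\cZ}=1)\big),
\]
applying independence of $\bar Y_\cZ$ and $\bar X_{\partial\cZ}$ to the first term, and the FKG inequality to the second (both $\{\bar Y_\cZ=1\}$ and $\{X_\cZ=0\}\cap\{\bar X_{\partial\cZ}=1\}$ are \emph{decreasing} in the initial active set, hence positively correlated). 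This yields $\pr(\bar Y_\cZ=1\mid\cG_0=\cZ)\le\pr(\bar Y_\cZ=1)\le(1-n^{-1})^{|\cZ|}=o(1)$ uniformly in $\cZ$, and then summing $\pr(\bar Y_\cZ=1,\cG_0=\cZ)\le o(1)\pr(\cG_0=\cZ)$ over all admissible $\cZ$ gives the conclusion. Your intuition to use FKG and monotonicity is the right one, but the specific decomposition of $\{\cG_0=\cZ\}$ into a decreasing event ($\bar X_{\partial\cZ}=1$) minus an intersection, plus the crucial independence of $Y_\cZ$ from $X_{\partial\cZ}$ coming from the geometric placement of $S_C$, is the missing content. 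Without it, your proof is incomplete.
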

\begin{proof}
Assume that $p_0$ is sufficiently small and $n$ sufficiently large so that the parameters $p$, $k$, $t$ and $\varepsilon$ satisfy all the required conditions below in the argument. (In particular, we may assume that $k,r,t$ are larger than a sufficently large constant, and $\eps$ is smaller than a sufficiently small constant.)
Define $k_0 = \left\lceil \frac{1000}{p}\log(1/p) \right\rceil$ and $k_1 = \left\lfloor \frac{p^2\log n}{3000\log(1/p)} \right\rfloor$. From~\eqref{eq:pkr} and since $p_0$ is small enough,
\begin{equation}\label{eq:k0k1}
k_0 < \frac{2000}{p}\log(1/p) = \frac{2000p^2\log^2(1/p)}{p^3\log(1/p)}
\le \frac{2000}{200^3 } \frac{p^2 \log n}{\log(1/p)}
< k_1,
\end{equation}
so there exist $k\in\nat$ satisfying $k_0\le k\le k_1$, and thus the statement is not vacuous.
Later in the argument we will need the bound
\begin{equation}\label{eq:pk8}
\frac{pk}{8} = \frac{pk}{8\log k} \log k \ge  \frac{pk_0}{8\log k_0} \log k \ge  \frac{900}{8} \log k \ge  111 \log k.
\end{equation}
Define $m=\lceil 8/p\rceil$, so in particular
\[
m < \frac{9}{p} < k_0 \le k,
\]
as required for the definition of $m$-good.
Moreover, $k \le k_1 < \log n < \frac{n-1}{2}$,
so every vertex of $\sL(n,k)$ has exactly $4k+2$ neighbours (i.e.~neighbourhoods in $\sL(n,k)$ do not wrap around the torus).
The number of vertices that are initially active in a set of $k$ vertices is distributed as the random variable $\Bin(k,p)$. Thus, by Chernoff bound (see, e.g., Theorem~4.5(2) in~\cite{MU05}), the probability that a vertex is initially $m$-bad is at most
\begin{equation}\label{eq:vbad}
4\pr\big(\Bin(k,p)< 2\lceil k/m\rceil\big) \le  4\pr\big(\Bin(k,p) \le (1-1/2 )pk\big) \le 4\exp(-pk/8),
\end{equation}
where we used that $2\lceil k/m\rceil \le 2\lceil pk/8 \rceil \le pk/2$.

Now consider the $t$-tessellation $\cT(n,t)$ of $[n]^2$ with $t = 100k^3$. In particular, we have
\begin{equation}\label{eq:tn}
t \le 100{k_1}^3 < \log^3 n <  n,
\end{equation}
so $\cT(n,t)$ is well defined.
For each cell $C\in\cT(n,t)$, let $X_C$ denote the indicator function of the event that $C$ is $m$-good.
Recall that every cell $C$ is a rectangle with at most $2t$ vertices per side, and thus $C$ has at most
$(2t+64mk^2)^2 \le 300^2 k^6$ vertices within $\ell_1$-distance $32mk^2$. Then, by~\eqref{eq:vbad}, \eqref{eq:pk8} and a union bound,
\[
\pr(X_C=0) \le 4(300^2 k^6)\exp(-pk/8)
 \le 600^2 k^6 \exp(-111\log k)
 \le (1/k)^{100} = \varepsilon.
 \]
Moreover, the outcome of $X_C$ is determined by the status (active or inactive) of all vertices within $\ell_1$-distance $32mk^2+k+1 \le 100mk^2 \le t$ from some vertex in $C$. All these vertices must belong to cells that are within $\ell_1$-distance at most $2$ from $C$ (recall that this refers to the distance in the graph of cells $\sL_1(n,t)$).
Therefore, for every cell $C\in\cT(n,t)$ and set of cells $\cZ\subseteq\cT(n,t)$ such that $C$ is at $\ell_1$-distance greater than $2$ from all cells in $\cZ$, the indicator $X_C$  is independent of $(X_{C'})_{C'\in\cZ}$.
Hence, $\bX=(X_C)_{C\in\cT(n,t)}$ is a $2$-dependent site-percolation model on the lattice $\sL_1(n,t)$ with $\pr(X_C=1)\ge 1-\varepsilon$.
Observe that $\bX$ satisfies the conditions of Lemma~\ref{lem:perco},
assuming that $\varepsilon=(1/k)^{100}$ is small enough (which follows from our choice of $p_0$) and since $\eps \ge {k_1}^{-100} > \log^{-100} n > \lfloor n/t\rfloor^{-1/3}$ (recall by~\eqref{eq:tn} that $t\le \log^3 n$, so the number of cells in $\cT(n,t)$ is $\nn^2 = \lfloor n/t\rfloor^2\to\infty$.) Then, by Lemma~\ref{lem:perco},
the largest $\ell_1$-component $\cG_0$ induced by the set of $m$-good cells is  a.a.s.\ $\varepsilon$-ubiquitous.
In particular
\begin{equation}\label{eq:PCmaxsmall}
\pr\left(|\cG_0|<(1-A\varepsilon)\lfloor n/t\rfloor^2\right) = o(1),
\end{equation}
where $A=10^8$.
We want to show that a.a.s.\ $\cG_0$ contains a seed.
For each cell $C\in\cT(n,t)$, let $Y_C$ be the indicator function of the event that
\[
S_C = (x+\lfloor t/2\rfloor,y+\lfloor t/2\rfloor) + S^k_m(0,0)
\]
is  initially active, where $(x,y)$ are the coordinates of the bottom left vertex in $C$. By~\eqref{eq:cloud0in}, $S_C$ is contained in $C$, and at $\ell_1$-distance greater than 
$\lfloor t/2\rfloor - 2mk > 40k^3 > 32mk^2+k+1$
from any other cell in $\cT(n,t)$, and therefore $Y_C$ depends only on vertices inside $C$ and at distance greater than $32mk^2+k+1$ from any other cell. In 
particular, $Y_C=1$ implies that $C$ is a seed.
Moreover, for any two disjoint sets of cells $\cZ,\cZ'\subseteq\cT(n,t)$, the random vectors
$(Y_C)_{C\in\cZ}$ and $(X_{C'})_{C'\in\cZ'}$ are independent, since they are determined by the status of two disjoint sets of vertices. For the same reason, $(Y_C)_{C\in\cZ}$ and $(Y_{C'})_{C'\in\cZ'}$ are also independent.
By~\eqref{eq:cloud0in} and~\eqref{eq:pkr}, the probability that a cell $C$ is a seed is at least
\begin{equation}\label{eq:PYC}
\pr(Y_C=1) \ge p^{25m^2k} \ge
p^{25(9/p)^2  (p^2\log n)/(3000\log(1/p))} =
e^{- (45^2/3000)  \log n} \ge
n^{-1}.
\end{equation}
For each cell $C$, define $\bar X_C = 1- X_C$ and $\bar Y_C = 1- Y_C$. Moreover, for each set of cells $\cZ$, let
\[
X_\cZ = \prod_{C\in\cZ} X_C,
\qquad
\bar X_\cZ = \prod_{C\in\cZ} \bar X_C,
\qquad
Y_\cZ = \prod_{C\in\cZ} Y_C
\qquad\text{and}\qquad
\bar Y_\cZ = \prod_{C\in\cZ} \bar Y_C.
\]
Now fix an $\ell_1$-connected set of cells $\cZ$ containing at least an $1-A\varepsilon$ fraction of the cells, and let $\partial\cZ$ be the set of cells not in $\cZ$ but adjacent in $\sL_1(n,t)$ to some cell in $\cZ$ (i.e.~the strict neighbourhood of $\cZ$ in $\sL_1(n,t)$).
Since $A\varepsilon<1/2$, the event $\cG_0=\cZ$ is the same as $X_\cZ \bar X_{\partial\cZ}=1$. Furthermore,
\begin{align*}
\pr\big((\bar Y_\cZ=1) \cap (X_\cZ \bar X_{\partial \cZ}=1)\big)
&= \pr\big((\bar Y_\cZ=1) \cap (\bar X_{\partial \cZ}=1)\big) - \pr\big((\bar Y_\cZ=1)\cap (X_\cZ=0)\cap (\bar X_{\partial \cZ}=1)\big)
\\
&\le \pr(\bar Y_\cZ =1) \pr(\bar X_{\partial \cZ}=1) - \pr(\bar Y_\cZ=1)  \pr\big((X_\cZ=0) \cap (\bar X_{\partial \cZ}=1)\big)
\\
&= \pr(\bar Y_\cZ=1)  \pr(X_\cZ \bar X_{\partial \cZ}=1),
\end{align*}
where we used that $\bar Y_\cZ$ and $\bar X_{\partial \cZ}$ are independent (since $\cZ$ and $\partial\cZ$ are disjoint sets of cells) and the fact that events $(\bar Y_\cZ=1)$ and $(X_\cZ=0) \cap (\bar X_{\partial \cZ}=1)$ are positively correlated (by the  FKG inequality --- see~e.g.~Theorem~(2.4) in~\cite{Gri99} --- since they are both decreasing properties with respect to the random set of active vertices). Therefore, using~\eqref{eq:PYC}, the independence of $Y_C$ and~\eqref{eq:tn}, we get
\begin{align*}
\pr\big(\bar Y_\cZ=1 \mid \cG_0=\cZ \big) &\le \pr\big(\bar Y_\cZ=1 \big) = \prod_{C\in\cZ} \pr\big(Y_C=0 \big)
\le \left(1-n^{-1}\right)^{|\cZ|}
\\
& \le \exp\left(-n^{-1} (1-A\varepsilon)\lfloor n/t\rfloor^2 \right) \le \exp\left(- (1-A\varepsilon) n^{-1+15/8} \right)=o(1).
\end{align*}
This bound is valid for all $\cZ$ with $|\cZ|\ge(1-A\varepsilon)\lfloor n/t\rfloor^2$, and hence
\[
\pr\big( (\text{$\cG_0$ has no seed}) \cap |\cG_0|\ge(1-A\varepsilon)\lfloor n/t\rfloor^2 \big) = o(1).
\]
Combining this with~\eqref{eq:PCmaxsmall}, we conclude that $\cG_0$ has a seed a.a.s.
When this is true, deterministically by Corollary~\ref{cor:growcells}, $\cG_0$ must eventually become active. Since we already proved that $\cG_0$ is a.a.s.\ $\varepsilon$-ubiquitous, the proof is completed.
\end{proof}
%
%
%
%
\section{The perfect matchings}\label{sec:matchings}
In this section, we analyze the effect of adding $r$ extra perfect matchings to $\sL(n,k)$ regarding the strong-majority bootstrap percolation process, and prove Theorem~\ref{thm:main}.
Throughout this section we assume $n$ is even, and restrict the asymptotics to this case. An $r$-tuple $\sM=(\sM_1,\sM_2,\ldots,\sM_r)$ of perfect matchings of the vertices in $[n]^2$ is \emph{$k$-admissible} if $\sM_1\cup\sM_2\cup\cdots\cup\sM_r\cup\sL(n,k)$ (i.e.~the graph resulting from adding the edges of all $\sM_i$ to $\sL(n,k)$) does not have multiple edges. Observe that, if $1\le r\le n/2$, then such $k$-admissible $r$-tuples exist: for instance, given a cyclic permutation $\sigma$ of the elements in $[n/2]$, we can pick each $\sM_j$ to be the perfect matching that matches each vertex $(x,y)\in[n/2]\times[n]$ to vertex $(n/2+\sigma^{j-1}(x),y)$.
Note that $\sL^*(n,k)$ is precisely the uniform probability space of all possible graphs $\sM_1\cup\sM_2\cup\cdots\cup\sM_r\cup\sL(n,k)$ such that $\sM$ is a $k$-admissible $r$-tuple of perfect matchings of $[n]^2$.

The following lemma will be used to bound the probability of certain unlikely events for a random choice of a $k$-admissible $r$-tuple $\sM$ of perfect matchings of $[n]^2$.
\begin{lem}\label{lem:switchings}
Let $S\subseteq Z\subseteq[n]^2$ with $|S|=4s$ for some $s \ge 1$, $|Z|=z$, and suppose that  $z+2(4k+r+2)^2(4rs) \le n^2/2$ and $4erz \le n^2/2$.
Let $\sM=(\sM_1,\sM_2,\ldots,\sM_r)$ be a random $k$-admissible $r$-tuple of perfect matchings of $[n]^2$. The probability that every vertex in $S$ is matched by at least one matching in $\sM$ to one vertex in $Z$ is at most 
\[
(16rz/n^2)^{2s}.
\]
\end{lem}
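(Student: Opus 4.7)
My plan is to bound the probability by a first-moment / union-bound over natural \emph{witness configurations}. For any outcome in which every $v\in S$ has a match in $Z$, each $v\in S$ admits a choice $(i(v),w(v))\in[r]\times Z$ with $\sM_{i(v)}(v)=w(v)$; I call such data $\varphi=\{(i(v),w(v))\}_{v\in S}$ a \emph{witness}, and bound $\pr(\text{event})$ by the sum over $\varphi$ of the probability that all edges prescribed by $\varphi$ appear in $\sM$.

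I organize this sum by the number $\ell$ of \emph{mutual pairs} of $\varphi$, namely pairs $\{v,v'\}\subseteq S$ with $i(v)=i(v')$, $w(v)=v'$, and $w(v')=v$. A configuration with $\ell$ mutual pairs prescribes exactly $4s-\ell$ distinct edges across the $r$ matchings (one per mutual pair, plus one per remaining non-mutual vertex), and there are at most $\binom{4s}{2\ell}(2\ell-1)!!\cdot r^{\ell}(rz)^{4s-2\ell}$ such $\varphi$: choose which $2\ell$ vertices of $S$ form the mutual pairs, pair them, assign each pair a matching index in $[r]$, and for each of the $4s-2\ell$ remaining vertices choose $(i(v),w(v))\in[r]\times Z$. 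For each fixed $\varphi$, I bound the probability that all its prescribed edges appear in $\sM$ by $(2/n^2)^{4s-\ell}$: this follows by sampling $\sM_1,\ldots,\sM_r$ sequentially and applying, to each $\sM_i$, the elementary estimate $\pr(\sM_i\supseteq P_i)\le\prod_{j=0}^{|P_i|-1}(n^2-2j-1)^{-1}\le(2/n^2)^{|P_i|}$, where $P_i$ denotes the partial matching prescribed for $\sM_i$.

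Using $\binom{4s}{2\ell}(2\ell-1)!!\le(4s)^{2\ell}/(2^{\ell}\ell!)$, the resulting union bound simplifies to
\[
\pr(\text{event})\le(2rz/n^2)^{4s}\sum_{\ell=0}^{2s}\frac{1}{\ell!}\left(\frac{4s^2n^2}{rz^2}\right)^{\ell}.
\]
To finish, I would split into cases based on $x:=4s^2n^2/(rz^2)$. If $x\le 2s$, then $\sum\le e^x\le e^{2s}$; the hypothesis $4erz\le n^2/2$ (which gives $rz/n^2\le1/(8e)$) combined with $4s\le z$ (which holds because $S\subseteq Z$) then absorbs the exponential into the target factor $8^{2s}$ that separates $(2rz/n^2)^{4s}$ from $(16rz/n^2)^{2s}$. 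If $x>2s$, the summands are non-decreasing, so $\sum\le(2s+1)x^{2s}/(2s)!$; Stirling's bound $(2s)!\ge(2s/e)^{2s}$ together with $4s\le z$ again delivers the bound $(16rz/n^2)^{2s}$.

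The main obstacle is controlling the $k$-admissibility of the $r$-tuple, since $(\sM_1,\ldots,\sM_r)$ is jointly uniform subject to being edge-disjoint and disjoint from $\sL(n,k)$, rather than being a product of independent uniform matchings. I would handle this by the sequential sampling just described, so that each $\sM_i$ is conditionally uniform among matchings edge-disjoint from the previously sampled matchings and from $\sL(n,k)$. The hypothesis $z+2(4k+r+2)^2(4rs)\le n^2/2$ guarantees that, at every stage, the forbidden potential partners of the at most $4rs$ witness vertices used by $\varphi$ form a vanishing fraction of $[n]^2$, so the single-matching estimate $(2/n^2)^{|P_i|}$ remains valid up to a multiplicative constant that is absorbed into the factor $16$.
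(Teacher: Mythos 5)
Your overall strategy (union bound over explicit ``witness'' configurations, stratified by the number of mutual pairs, followed by careful bookkeeping and Stirling-type estimates) is genuinely different from the paper's, which never enumerates witnesses: the paper introduces the event $H_w$ that exactly $w$ edges of $\sM_1\cup\cdots\cup\sM_r$ join $S$ to $Z$, uses a double-counting switching argument to show $|H_w|/|H_0|\le(4erz/n^2)^w$, and then sums over $w\ge 2s$. Your final arithmetic (the split into $x\le 2s$ and $x>2s$, using $4s\le z$ and $4erz\le n^2/2$ to absorb constants) is fine and would yield the claimed bound if the per-configuration probability estimate held.

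The gap is precisely at the step you flag yourself and then dismiss: the claim that a uniform random $k$-admissible $r$-tuple can be sampled sequentially ``so that each $\sM_i$ is conditionally uniform among matchings edge-disjoint from the previously sampled matchings and from $\sL(n,k)$.'' This is false. Under the uniform measure on $k$-admissible $r$-tuples, the marginal of $\sM_1$ is proportional to the number of ways to complete $\sM_1$ to a full $k$-admissible $r$-tuple, which varies with $\sM_1$; likewise the conditional law of $\sM_i$ given $(\sM_1,\ldots,\sM_{i-1})$ is not uniform over matchings avoiding the forbidden edges. Hence your chain-rule factorisation with per-factor bound $(2/n^2)^{|P_i|}$ is unjustified. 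Moreover, even the elementary per-matching estimate you invoke (the product $\prod_j (n^2-2j-1)^{-1}$) is only exact for a uniform matching of the \emph{complete} graph on $[n]^2$; bounding the probability that a uniform matching of the complement of $\sL(n,k)\cup\sM_1\cup\cdots\cup\sM_{i-1}$ contains a prescribed partial matching already requires a switching (or comparable) argument, which is exactly the tool the paper deploys. In short, you have outsourced the crux of the lemma --- controlling edge-probabilities under the constrained, non-product measure --- to an assertion that the constant is ``absorbed into the factor $16$,'' and there is no elementary route to that assertion; the hypothesis $z+2(4k+r+2)^2(4rs)\le n^2/2$ is indeed what makes a switching argument go through, but it does not by itself deliver your sequential uniformity claim.
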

\begin{proof}
Let $H_w$ be the event that there are exactly $w$ edges in $\sM_1\cup\sM_2\cup\cdots\cup\sM_r$ with one endpoint in $S$ and the other one in $Z$ (possibly also in $S$).
Note that the event in the statement implies that $\bigcup_{2s\le w\le 4rs}H_w$ holds. We will use the switching method to bound $\pr(H_w)$. For convenience, with a slight abuse of notation, the set of choices of $\sM$ that satisfy the event $H_w$ is also denoted by $H_w$.

Given any arbitrary element in $H_w$ (i.e.~given a fixed $k$-admissible $r$-tuple $\sM$ satisfying event $H_w$), we build an element in $H_0$ as follows. Let $u_1v_1,u_2v_2,\ldots,u_wv_w$ be the edges with one endpoint $u_i\in S$ and the other one $v_i\in Z$ (if both endpoints belong to $S$, assign the roles of $u_i$ and $v_i$ in any deterministic way), and let $1\le c_i\le r$ be such that $u_iv_i$ belongs to the matching $\sM_{c_i}$.
Let $R=\{u_1,\ldots,u_w,v_1,\ldots,v_w\}$. Throughout the proof, given any $U\subseteq[n]^2$, we denote by $N(U)$ the set of vertices that belong to $U$ or are adjacent in $\sM_1\cup\sM_2\cup\cdots\cup\sM_r\cup\sL(n,k)$ to some vertex in $U$.
Now we proceed to choose vertices $u'_1,u'_2,\ldots,u'_w$ and $v'_1,v'_2,\ldots,v'_w$ as follows.
Pick $u'_1\notin N(N(R))\cup Z$ and let $v'_1$ be the vertex adjacent to $u'_1$ in $\sM_{c_1}$; for each $1<i\le r$, pick $u'_i \notin N(N(R\cup\{u'_1,\ldots,u'_{i-1},v'_1,\ldots,v'_{i-1}\}))\cup Z$ and let $v'_i$ be the vertex adjacent to $u'_i$ in $\sM_{c_i}$. Since
\begin{eqnarray*}
| N(N(R\cup\{u'_1,\ldots,u'_{w},v'_1,\ldots,v'_{w}\}))\cup Z | &\le&  4w+4w(4k+r+2)+4w(4k+r+2)^2+z \\
&\le& 2(4k+r+2)^2(4rs)+z \le n^2/2,
\end{eqnarray*}
then there are at least
\[
(n^2/2)^w
\]
choices for $u'_1,u'_2,\ldots,u'_w$ ($v'_1,v'_2,\ldots,v'_w$ are then determined). We delete the edges $u_iv_i$ and  $u'_iv'_i$, and replace them by $u_iu'_i$ and  $v_iv'_i$. This switching operation does not create multiple edges, and thus generates an element of $H_0$.

Next, we bound from above the number of ways of reversing this operation. Given an element of $H_0$, there are exactly $4rs$ edges in $\sM_1\cup\sM_2\cup\cdots\cup\sM_r$ incident to vertices in $S$ (each such edge has exactly one endpoint in $S$ and one in $[n]^2\setminus Z$). We pick $w$ of these $4rs$ edges. Call them $u_1u'_1,u_2u'_2,\ldots,u_wu'_w$, where $u_i\in S$ and $u'_i\in[n]^2\setminus Z$, and let $1\le c_i\le r$ be such that $u_iu'_i\in\sM_{c_i}$. Pick also vertices $v_1,v_2,\ldots,v_w\in Z$, and let $v'_i$ be the vertex adjacent to $v_i$ in $\sM_{c_i}$. Delete $u_iu'_i$ and  $v_iv'_i$, and replace them by $u_iv_i$ and  $u'_iv'_i$. There are at most
\[
\binom{4rs}{w}z^w \le \left(\frac{4ersz}{w}\right)^w \le (2erz)^w
\]
ways of doing this correctly, and thus recovering an element of $H_w$. Therefore, $(n^2/2)^w|H_w| \le (2erz)^w |H_0|$, so $\pr(H_w) \le (4erz/n^2)^w$. Hence, we bound the probability of the event in the statement by
\[
\sum_{w=2s}^{4rs} \pr(H_w) \le \sum_{w\ge 2s} (4erz/n^2)^w \le (4erz/n^2) \sum_{w\ge 0} 2^{-w} = 2(4erz/n^2)^{2s} \le (16 rz/n^2)^{2s}.
\]
This proves the lemma.
\end{proof}

Given $1\le t\le n$, consider the $t$-tessellation $\cT(n,t)$ defined in Section~\ref{ssec:tessellation}. Recall that we identify the set of cells $\cT(n,t)$ with $[\nn]^2$, where $\nn=\lfloor n/t\rfloor$.
Given a $k$-admissible $r$-tuple $\sM$ of perfect matchings, we want to study the set of cells $\cR\subseteq[\nn]^2$ that contain vertices that remain inactive at the end of the process $\M_r(\sM_1\cup\sM_2\cup\cdots\cup\sM_r\cup\sL(n,k);p)$. The following lemma gives a deterministic necessary condition that ``small'' $\ell_\infty$-components of $\cR$ must satisfy, regardless of the initial set $U$ of inactive vertices. Recall that the set of vertices that remain inactive at the end of the process is precisely the vertex set of the $(2k+2)$-core of the subgraph induced by $U$.
\begin{lem}\label{lem:needstable}

Given any $r,k,t,n\in\nat$ (with even $n$) satisfying
\[
2r < 2k +2 \le t\le n/2,
\]
let $\sM$ be a $k$-admissible $r$-tuple of perfect matchings of the vertices in $[n]^2$, and let $U\subseteq[n]^2$ be any set of vertices. Let $U^\circ\subseteq U$ denote the vertex set of the $(2k+2)$-core of the subgraph of $\sM_1\cup\sM_2\cup\cdots\cup\sM_r\cup\sL(n,k)$ induced by $U$.
Assuming that $U^\circ\neq\emptyset$,
let $\cR$ be the set of all cells in the $t$-tessellation $\cT(n,t)$ that contain some vertex in $U^\circ$;
and let $\cB$ be an $\ell_\infty$-component of $\cR$ of diameter at most $\nn/2$ in $\sL_\infty(n,t)$.
Then, $\bigcup_{C\in\cB} C$ must contain at least $4$ vertices $v_1,v_2,v_3,v_4$ such that each $v_i$ is matched by some matching of $\sM$ to a vertex in $U^\circ$.
\end{lem}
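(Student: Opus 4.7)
The plan is to locate four distinct vertices of $W^\circ := U^\circ\cap\bigcup_{C\in\cB}C$ that each have a matching partner in $U^\circ$, via an extremal analysis of $W^\circ$ in $\ent^2$. Since $\diam_{\ell_\infty}\cB\le\nn/2$, I lift $\cB$ to $\ent^2$ without wrap-around and use integer coordinates. Because $t\ge 2k+2>k$, every $\sL(n,k)$-neighbour of any vertex $v$ lies in $v$'s own cell or in an $\ell_\infty$-adjacent one; and since $\cB$ is an $\ell_\infty$-component of $\cR$, the cells $\ell_\infty$-adjacent to $\cB$ but not lying in $\cB$ are outside $\cR$, and hence contain no vertex of $U^\circ$. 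Consequently, for every $v\in W^\circ$, all $\sL(n,k)$-neighbours of $v$ lying in $U^\circ$ lie in fact in $W^\circ$.

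Write $\alpha(v)$ and $\beta(v)$ for the numbers of $\sL(n,k)$-neighbours and matching-neighbours of $v$ in $U^\circ$. Membership in the $(2k+2)$-core gives $\alpha(v)+\beta(v)\ge 2k+2$; combined with $\beta(v)\le r\le k$ (from $2r<2k+2$), this yields $\alpha(v)\ge k+2$ for every $v\in W^\circ$. A crucial structural consequence is that none of the four bounding-box corners $(x_{\min},y_{\min}),(x_{\max},y_{\min}),(x_{\min},y_{\max}),(x_{\max},y_{\max})$ of $W^\circ$ lies in $W^\circ$. Indeed, a vertex at $(x_{\min},y_{\min})$ would have $\sL(n,k)$-neighbours in $W^\circ$ only in row $y_{\min}+1$ and columns $\{x_{\min},\dots,x_{\min}+k\}$, giving at most $k+1<k+2$, contradicting $\alpha\ge k+2$; the other three corners are symmetric. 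In particular $y_{\min}<y_{\max}$ and $x_{\min}<x_{\max}$.

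Next, I choose four candidates as lexicographic extrema of $W^\circ$: let $v_1,v_2$ be the lex-min and lex-max of $(y(v),x(v))$ (the bottom-leftmost and top-rightmost vertex of $W^\circ$), and $v_3,v_4$ the lex-min and lex-max of $(x(v),y(v))$ (the leftmost-bottommost and rightmost-topmost vertex). For each $v_i$ I count its $4k+2$ $\sL(n,k)$-neighbours. For $v_1$, all $2k+1$ neighbours in row $y_{\min}-1$ are excluded by $y$-extremality, giving $\alpha(v_1)\le 2k+1$; the count for $v_2$ is analogous. For $v_3$, the $2k$ neighbours in columns $x_{\min}-k,\dots,x_{\min}-1$ are excluded by $x$-extremality, and one further neighbour $(x_{\min},y(v_3)-1)$ in the column $x=x_{\min}$ is excluded by the tie-break on $y$, giving $\alpha(v_3)\le 2k+1$; $v_4$ is analogous. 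Thus $\beta(v_i)\ge 1$ for each $i$, so $v_i$ is matched by some $\sM_j$ to a vertex of $U^\circ$.

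Finally, distinctness: $v_1\ne v_2$ and $v_3\ne v_4$ because their primary coordinates differ ($y_{\min}\ne y_{\max}$ and $x_{\min}\ne x_{\max}$, respectively). For each of the remaining four pairs $(v_i,v_j)$ with $i\in\{1,2\}$ and $j\in\{3,4\}$, a coincidence would force one of the forbidden bounding-box corners into $W^\circ$: for instance, $v_1=v_3$ would require $x(v_1)=x_{\min}$ and $y(v_3)=y_{\min}$, placing $(x_{\min},y_{\min})\in W^\circ$, which was excluded above. Hence $v_1,v_2,v_3,v_4$ are four distinct vertices of $W^\circ\subseteq\bigcup_{C\in\cB}C$, each matched by some $\sM_j$ to a vertex of $U^\circ$, completing the proof. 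The main obstacle is this distinctness step; its clean four-way separation rests on the non-obvious observation that a vertex of the $(2k+2)$-core cannot occupy an extreme-$x$, extreme-$y$ corner of $W^\circ$, because its $\sL(n,k)$-neighbourhood in $W^\circ$ would be too small to meet the degree floor $k+2$.
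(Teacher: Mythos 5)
Your proof is correct and follows essentially the same approach as the paper's: pick extremal vertices of $U^\circ$ within $\bigcup_{C\in\cB}C$, bound their $\sL(n,k)$-degree into $U^\circ$ by $2k+1$ via their extremal position (using the diameter bound to lift without wrap-around and using $t>k$ to confine lattice-neighbours to $\ell_\infty$-adjacent cells), and deduce each needs a matching edge into $U^\circ$. Your lexicographic choice and the observation that no bounding-box corner can lie in $W^\circ$ give a slightly cleaner unified distinctness argument than the paper's direct case analysis, but the underlying idea is the same.
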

\begin{proof}
We first include a few preliminary observations that will be needed in the argument.
Note that the condition $2k +2 \le t\le n/2$ implies that $\cT(n,t)$ has at least $2\times2$ cells, and also that the neighbourhood of any vertex in $\sL(n,k)$ has smaller horizontal (and vertical) length than the side of any cell in $\cT(n,t)$ (so that the neighbourhood does not cross any cell from side to side, and does not wrap around the torus).
Set $A=[n]^2\setminus U$ (we can think of $A$ and $U$ as the sets of initially active and inactive vertices, respectively),
and define $B=\bigcup_{C\in\cB}C$, namely the set of all vertices in cells in $\cB$. Any two vertices $v$ and $w$ that are adjacent in $\sL(n,k)$ must belong to cells at $\ell_\infty$-distance at most $1$ in $\cT(n,t)$. In particular,  if $v\in B$ and $w\notin B$, then $w$ must belong to some cell not in $\cR$ (since $\cB$ is an $\ell_\infty$-component of $\cR$), and therefore $w\in A$ (so $w\notin U^\circ$). 
Finally, since the $\ell_\infty$-diameter of $\cB$ is at most $\nn/2$,
 $B$ can be embedded into a rectangle that does not wrap around the torus $[n]^2$.
All geometric descriptions (such as `top', `bottom', `left' and `right') in this proof concerning vertices in $B$ should be interpreted with respect to this rectangle.

In view of all previous ingredients, we proceed to prove the lemma.
Let $v_{\mathtt T}$ (respectively, $v_{\mathtt B}$) be any vertex in the top row (respectively,  bottom row) of $B\cap U^\circ$, which is non-empty by assumption.
Suppose for the sake of contradiction that $v_{\mathtt T} = v_{\mathtt B}$. Then, $B\cap U^\circ$ has a single row, and the leftmost vertex $v$ of this row has no neighbours (with respect to the graph $\sL(n,k)$) in $U^\circ$. Indeed, from an earlier observation, any neighbour of $v$ lies either in $B$ (and thus in a row different from $B\cap U^\circ$) or in $A$ (and then not in $U^\circ$). Therefore, $v$ has at most $r<2k+2$ neighbours in $U^\circ$ with respect to the graph $\sM_1\cup\sM_2\cup\cdots\cup\sM_r\cup\sL(n,k)$, which contradicts the fact that $v\in U^\circ$.
We conclude that $v_{\mathtt T} \ne v_{\mathtt B}$.
Let $v_{\mathtt L}$ (respectively, $v_{\mathtt R}$) be the topmost vertex in the leftmost column (respectively, rightmost column) of $B\cap U^\circ$. Similarly as before, if $v_{\mathtt L} = v_{\mathtt T}$, then $v_{\mathtt L}$ has at most $k+1$ neighbours in $U^\circ$ with respect to $\sL(n,k)$ (the ones below and not to the left of $v_{\mathtt L}$), and thus
at most $r+k+1<2k+2$ neighbours in $U^\circ$ with respect to $\sM_1\cup\sM_2\cup\cdots\cup\sM_r\cup\sL(n,k)$, which leads again to contradiction. Therefore, $v_{\mathtt L} \neq v_{\mathtt T}$ and, by a symmetric argument, $v_{\mathtt L} \neq v_{\mathtt B}$, $v_{\mathtt R} \neq v_{\mathtt T}$ and $v_{\mathtt R} \neq v_{\mathtt B}$. This also implies $v_{\mathtt L} \neq v_{\mathtt R}$ (since otherwise, $v_{\mathtt L} = v_{\mathtt T} = v_{\mathtt R}$).
Hence, the vertices $v_{\mathtt T},v_{\mathtt B},v_{\mathtt L},v_{\mathtt R}$ are pairwise different, and each of them has at most $2k+1$ neighbours in $U^\circ$ with respect to the graph $\sL(n,k)$ (this follows again from the extremal position of $v_{\mathtt T},v_{\mathtt B},v_{\mathtt L},v_{\mathtt R}$ in $B\cap U^\circ$, together with the earlier fact that a neighbour of $v\in B$ not in $B$ must belong to $A$). Therefore, $v_{\mathtt T},v_{\mathtt B},v_{\mathtt L},v_{\mathtt R}$ must be matched by at least one matching in $\sM$ to other vertices in $U^\circ$.
\end{proof}
The conclusion of this lemma motivates the following definition. A collection of sets of cells $\cB_1,\cB_2,\ldots,\cB_s\subseteq\cT(n,t)$ is said to be \emph{stable} (w.r.t.\ a $k$-admissible $r$-tuple $\sM$ of  perfect matchings) if, for every set $\cB_j$, there are at least 4 vertices in $\bigcup_{C\in\cB_j} C$ that are matched by some perfect matching of $\sM$ to some vertex in $\bigcup_{i=1}^s \bigcup_{C\in\cB_i} C$. So the conclusion of Lemma~\ref{lem:needstable} says that the small $\ell_\infty$-components of $\cR$ must form a stable collection of sets with respect to $\sM$.
In Section~\ref{sec:perco}, we showed that, for an appropriate choice of parameters, the set of cells that are active at the end of Phase~1 is  a.a.s.\ contains an $\varepsilon$-ubiquitous $\ell_1$-component (recall that we apply Phase~1 to $\M_{2r}(\sL(n,k);p)$). 
If this event occurs, then the set of cells that are active after Phase~2 (i.e.~after adding a $k$-admissible $r$-tuple $\sM$ of perfect matchings, and resuming the strong-majority bootstrap percolation process) must also contain an $\varepsilon$-ubiquitous $\ell_1$-component, deterministically regardless of the matchings. In particular, the set of cells $\cR$ containing some inactive vertices at the end of the process must contain at most $A\varepsilon\nn^2$ cells, and every subset of $\ell_\infty$-components of $\cR$ must satisfy~\eqref{eq:diams}. Moreover, by Lemma~\ref{lem:needstable}, the collection of $\ell_\infty$-components of $\cR$ must be stable with respect to $\sM$. The following lemma shows that for a randomly selected $k$-admissible perfect matching $\sM$, a.a.s.\ there is no proper set of cells $\cR$ satisfying all these properties. Therefore, assuming that Phase~1 terminated with an $\varepsilon$-ubiquitous set of active cells, Phase~2 ends with all cells (and thus all vertices) active a.a.s.
\begin{lem}\label{lem:nostable}
Let $0<\varepsilon_0 < 1/(2A)$ be a sufficiently small constant (where $A=10^8$). Given any $\varepsilon=\varepsilon(n)\in\real$, $k=k(n)\in\nat$, $r=r(n)\in\nat$ and $t=t(n)\in\nat$ satisfying (eventually for all large enough even $n\in\nat$)
\begin{equation}\label{eq:kret}
1\le r \le k, \quad
0<\varepsilon\le\varepsilon_0
\quad\text{and}\quad
1\le kt^5 \le \min \left\{ (1/\varepsilon)^{1/4}, n/\log^6 n \right\},
\end{equation}
consider the $t$-tessellation $\cT(n,t)$ of $[n]^2$, and pick a $k$-admissible $r$-tuple $\sM$ of  perfect matchings of the vertices in $[n]^2$ uniformly at random. Set $\nn=\lfloor n/t\rfloor \to \infty$. Then, the following holds a.a.s.: for any $1\le s\le A\varepsilon\nn^2$ and any collection of disjoint $\ell_\infty$-connected sets of cells $\cB_1,\cB_2,\ldots,\cB_s$ satisfying
\begin{equation}\label{eq:diams2}
\min_{1\le i\le j} \{ \diam_{\ell_\infty}(\cB_i) \} \le \frac{A}{\log(1/\varepsilon)}\log(\nn^2/j) \qquad \forall 1\le j\le s,
\end{equation}
the collection $\cB_1,\cB_2,\ldots,\cB_s$ is not stable with respect to $\sM$.
\end{lem}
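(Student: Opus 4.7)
The plan is to prove the lemma via a union bound over all tuples $(s,\cB,S)$ where $s\in\{1,\ldots,A\varepsilon\nn^2\}$, $\cB=(\cB_1,\ldots,\cB_s)$ is a valid collection of disjoint $\ell_\infty$-connected sets of cells satisfying \eqref{eq:diams2}, and $S=(S_1,\ldots,S_s)$ is a choice of $4$-vertex witness sets $S_j\subseteq\bigcup_{C\in\cB_j}C$. Stability of $\cB$ implies such witnesses exist with every vertex of $\bigcup_j S_j$ matched by some $\sM_i$ to $Z:=\bigcup_i\bigcup_{C\in\cB_i}C$, which is exactly the event to which Lemma~\ref{lem:switchings} applies and bounds the conditional probability by $(16rz/n^2)^{2s}$ with $z=|Z|$.

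First I would sort the $\cB_j$'s so that their $\ell_\infty$-diameters $d_j$ are non-increasing. Then the constraint \eqref{eq:diams2}, in the form inherited from the $\varepsilon$-ubiquitous definition (applied across all sub-collections of size $j$), yields $d_j\le f(j):=(A/\log(1/\varepsilon))\log(\nn^2/j)$ and therefore $m_j:=|\cB_j|\le(2f(j)+1)^2$, which controls both $|Z|\le 4t^2M$ with $M:=\sum_j m_j$ and the number of admissible shapes per $\cB_j$. For a fixed size profile $(m_j)$, the number of ordered collections is at most $\nn^{2s}c^{M}$ (with $c$ a standard polyomino-counting constant bounding the number of $\ell_\infty$-connected sets of $m$ cells containing a fixed cell), and the number of witness choices is $\prod_j(4t^2 m_j)^4$. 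Before invoking Lemma~\ref{lem:switchings}, I would verify its size hypotheses $z+2(4k+r+2)^2(4rs)\le n^2/2$ and $4erz\le n^2/2$ using the bounds on $r\le k$, the diameter-driven bound on $M$, and the lemma's conditions on $kt^5$ and $\varepsilon$.

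Combining these ingredients, dividing by $s!$ to pass from ordered to unordered collections, and applying AM--GM in the form $\prod_j m_j^4\le(M/s)^{4s}$, the per-$s$ contribution should reduce to roughly $(Ck^2 t^8\varepsilon)^s$ for some explicit constant $C$, which, summed over $s\ge1$, yields $o(1)$ provided $k^2 t^8\varepsilon$ is sufficiently small. The main obstacle is that neither $\varepsilon\le\varepsilon_0$ nor $\varepsilon\le(kt^5)^{-4}$ individually delivers this smallness (the first is weak when $k,t$ are large, the second when $k,t$ are small); the resolution is to combine them via the geometric-mean inequality $k^2 t^8\varepsilon\le\sqrt{(k^2 t^8\varepsilon_0)(k^{-2}t^{-12})}=\sqrt{\varepsilon_0}/t^2\le\sqrt{\varepsilon_0}$, which can be made as small as necessary by choosing $\varepsilon_0$ small. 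A subtlety to handle along the way is that when $f(1)$ is not bounded by a constant (i.e., $\log(1/\varepsilon)$ is small compared with $\log\nn$), the naive polyomino count per $\cB_j$ could blow up; this is controlled by observing that only the first $\lceil\nn^2\varepsilon^{1/A}\rceil$ indices $j$ can have $d_j\ge1$ (so $m_j>1$), with all remaining $\cB_j$ being single cells, and splitting the count and probability bounds accordingly.
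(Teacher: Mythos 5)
There is a genuine gap. Your union bound enumerates the exact polyomino shapes of the $\cB_j$'s, paying an exponential factor $c^{m_j}$ per set (so $c^M$ in total), while the probability bound you get from Lemma~\ref{lem:switchings} scales only \emph{polynomially} in the sizes $m_j$ (like $(rt^2 M/n^2)^{2s}$). These two factors fail to balance when the diameters $d_j$ are large. Concretely, fix $\varepsilon=\varepsilon_0$ to a small constant, take $k=t=r=1$ (permitted by \eqref{eq:kret} once $n$ is large), so $\nn=n$. For $s=1$ you then have $d_1=\frac{A}{\log(1/\varepsilon_0)}\log(\nn^2)=\Theta(\log\nn)$ and hence $m_1$ can be as large as $\Theta(\log^2\nn)$, so your shape count $\nn^2\,c^{m_1}$ is $\nn^{\Theta(\log\nn)}$ --- superpolynomial in $\nn$ --- whereas $(16rz/n^2)^2$ with $z=O(m_1)$ is only $\Theta(\polylog(\nn)/\nn^4)$; the $s=1$ term of your union bound already diverges. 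The subtlety you flag at the end (splitting indices according to whether $d_j\ge1$) addresses a different regime: it controls the many singleton $\cB_j$'s when $s$ is large, but does nothing for the first few indices, where $d_j=\Theta(\log\nn)$ is the problem.

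The paper avoids this by \emph{never enumerating the shape of $\cB_j$}. Only the four witness cells $C_{i,1},\ldots,C_{i,4}$ per set are chosen (the anchor $C_{i,1}$ in $\nn^2$ ways, the other three within $\ell_\infty$-distance $d_i$ of it, so at most $9d_i^2$ choices each --- a \emph{polynomial} in $d_i$), plus the four witness vertices. The target set $Z$ fed to Lemma~\ref{lem:switchings} is the union of all cells in the $\ell_\infty$-balls $\cZ_i$ of radius $d_i$ centered at $C_{i,1}$; since $\diam_{\ell_\infty}\cB_i\le d_i$, this $Z$ still contains $\bigcup_j\bigcup_{C\in\cB_j}C$, so the stability event implies each witness is matched into $Z$, and $|Z|$ obeys the same $O\bigl(\tfrac{t^2 s}{\log^2(1/\varepsilon)}\log^2(\nn^2/s)\bigr)$ bound you derived. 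With that single change the rest of your plan --- sorting by non-increasing diameter, AM--GM over the per-set factors, checking the size hypotheses of Lemma~\ref{lem:switchings} via $r\le k$ and $kt^5\le(1/\varepsilon)^{1/4}$, and summing the geometric series --- lines up with the paper's computation and goes through.
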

\begin{proof}
We assume throughout the proof that $\varepsilon_0$ is sufficiently small and $n$ sufficiently large, so that all the required inequalities in the argument are valid. In particular, by~\eqref{eq:kret}, $k\le (n-1)/2$, so the neighbourhood with respect to $\sL(n,k)$ of any vertex does not wrap around the torus.

Given $1\le s\le A\varepsilon\nn^2$, suppose there exists a collection of $s$ pairwise-disjoint $\ell_\infty$-connected sets of cells $\{ \cB_1,\cB_2,\ldots,\cB_s \}$ satisfying~\eqref{eq:diams2} and which is stable with respect to $\sM$.
Assume w.l.o.g.\ that $\diam_{\ell_\infty}(\cB_1)\ge\cdots\ge\diam_{\ell_\infty}(\cB_s)$, so in particular
\[
\diam_{\ell_\infty}(\cB_i)\le d_i \qquad \forall i\in[s], \qquad\text{where}\quad  d_i=\frac{A}{\log(1/\varepsilon)}\log(\nn^2/i).
\]
This implies that there must exist $4s$ distinct vertices $v_{i,\ell}$ ($i\in[s]$, $\ell\in[4]$) with the following properties. Let $C_{i,\ell}$ be the cell containing $v_{i,\ell}$, and let $\cZ_i \supseteq \cB_i$ be the set of cells in $\cT(n,t)$ within $\ell_\infty$-distance $d_i$ from $C_{i,1}$. (Note that not necessarily $\cZ_i \cap \cZ_j = \emptyset$ for $i \ne j$.) Then, for each $i\in[s]$, the cells $C_{i,2},C_{i,3},C_{i,4}$ are within $\ell_\infty$-distance $d_i$ from $C_{i,1}$ (i.e.~$C_{i,2},C_{i,3},C_{i,4}\in\cZ_i$). Moreover, putting $\cZ=\bigcup_{i=1}^s \cZ_i$ and $Z=\bigcup_{C\in\cZ} C$, $\sM$ matches each vertex $v_{i,\ell}$ ($i\in[s]$, $\ell\in[4]$) with some vertex in $Z$.  Let $E_s$ be the event that a tuple of $4s$ distinct vertices $v_{i,\ell}$ with the above properties exists. We will show that it is very unlikely that $E_s$ holds, given a random $k$-admissible $r$-tuple $\sM$ of perfect matchings. 
Given $1\le s\le A\varepsilon\nn^2$, let $M_s$ count the number of ways to choose $4s$ distinct vertices $v_{i,\ell}$ ($i\in[s]$, $\ell\in[4]$) so that, for each $i\in[s]$, the cells $C_{i,2},C_{i,3},C_{i,4}$ belong to $\cZ_i$. Also, define $M_0=1$ for convenience.
We will bound $M_s$ from above by $M_{\lfloor s/2\rfloor}$ times the number of choices for the remaining vertices $v_{\lfloor s/2\rfloor+1,\ell},\ldots,v_{s,\ell}$ ($\ell\in[4]$). Note that, if $i\ge \lfloor s/2\rfloor+1$, for each choice of $C_{i,1}$, there are $(2d_i+1)^2 \le 9d_i^2 \le 9(d_{\lfloor s/2\rfloor+1})^2$ choices for each $C_{i,\ell}$ ($\ell\in\{2,3,4\}$) (since $d_i\ge1$ for all $i\in[s]$). Moreover, each cell $C\in\cT(n,t)$ has at most $4t^2$ vertices. Therefore,
\begin{align*}
M_s &\le
M_{\lfloor s/2\rfloor} \binom{\nn^2}{\lceil s/2\rceil} \left(  9(d_{\lfloor s/2\rfloor+1})^2 \right)^{3\lceil s/2\rceil} (4t^2)^{4\lceil s/2\rceil}
\notag\\
&\le
M_{\lfloor s/2\rfloor} \left(\frac{e \nn^2}{\lceil s/2\rceil}\right)^{\lceil s/2\rceil} \left( \frac{9A^2}{\log^2(1/\varepsilon)}\log^2\left(\frac{\nn^2}{\lfloor s/2\rfloor+1}\right) \right)^{3\lceil s/2\rceil} (4t^2)^{4\lceil s/2\rceil}
\notag\\
&= M_{\lfloor s/2\rfloor} \left(
2^83^6A^6 e \frac{t^8}{\log^6(1/\varepsilon)} \frac{\nn^2}{\lceil s/2\rceil} \log^6\left(\frac{\nn^2}{\lfloor s/2\rfloor+1}\right)  \right)^{\lceil s/2\rceil}.
\end{align*}
This combined with an easy inductive argument implies that, for every $1\le s\le A\varepsilon\nn^2$,
$$
M_s \le \left(
10^7A^6 \frac{t^8}{\log^6(1/\varepsilon)} (\nn^2/s) \log^6\left(\nn^2/s\right)  \right)^s .
$$
Now observe that, regardless of the choice of the $4s$ vertices $v_{i,\ell}$,
\begin{align}
|\cZ|  \le \sum_{i=1}^s |\cZ_i| &\le \sum_{i=1}^s 9{d_i}^2  = \sum_{i=1}^s \frac{9A^2}{\log^2(1/\varepsilon)}\log^2(\nn^2/i)
\le \frac{9A^2}{\log^2(1/\varepsilon)} \left( \sum_{i=1}^s \log(\nn^2/i) \right)^2
\notag\\
&= \frac{9A^2}{\log^2(1/\varepsilon)} \log^2(\nn^{2s}/s!)
\le \frac{9A^2}{\log^2(1/\varepsilon)} s \log^2(e\nn^2/s)
\le \frac{10A^2}{\log^2(1/\varepsilon)} s \log^2(\nn^2/s).
\label{eq:boundcalZ}
\end{align}
We will use Lemma~\ref{lem:switchings} to bound the probability $P_s$ that each vertex in $S=\{v_{i,\ell} : i\in[s], \ell\in[4]\}$ is matched by a random $k$-admissible perfect matching of $\sM$ to a vertex in $Z=\bigcup_{C\in\cZ} C$. Let $z=|Z|$, and recall $|S|=4s$ with $s\le A\eps \nn^2$. Then, from~\eqref{eq:boundcalZ} and the fact that each cell has at most $4t^2$ vertices, we get
\begin{equation}\label{eq:boundz}
z \le 4t^2|\cZ| \le \frac{40A^3\eps t^2}{\log^2(1/\varepsilon)} \lfloor n/t\rfloor^2 \log^2(1/(A\eps)) \le  40A^3 \eps n^2.
\end{equation}
Our assumptions in~\eqref{eq:kret} imply $r\le k \le (1/\eps)^{1/4}$. Using this fact and~\eqref{eq:boundz}, yields
\[
4erz \le 160eA^3 \eps^{3/4} n^2 \le n^2/2
\]
and also
\[
z+2 (4k+r+2)^2(4rs) \le z+ 400k^3 s \le 40A^3 \eps n^2+ 400(1/\eps)^{3/4} A\eps \nn^2 \le n^2/2,
\]
which are the two conditions we need to apply Lemma~\ref{lem:switchings}. Hence, by Lemma~\ref{lem:switchings} and using~\eqref{eq:boundcalZ} and the first step in~\eqref{eq:boundz},
\[
P_s = (16rz/n^2)^{2s} \le  (64rt^2|\cZ|/n^2)^{2s}
\le \left( \frac{640 A^2 r}{\log^2(1/\varepsilon)} (s/\nn^2) \log^2(\nn^2/s) \right)^{2s}.
\]
We conclude that, for $1\le s\le A\varepsilon\nn^2$,
\[
\pr(E_s) \le M_sP_s
\le \left( 10^{13} A^{10} \frac{r^2t^8}{\log^{10}(1/\varepsilon)} (s/\nn^2) \log^{10}\left(\nn^2/s\right)  \right)^s
\le \left( 10^{13} A^{11}  r^2 t^8 \varepsilon \right)^s
\le \varepsilon^{s/2},
\]
where we used~\eqref{eq:kret} and the fact that $\varepsilon_0$ is sufficiently small.
Summing over $s$, since the ratio $\pr(E_{s+1})/\pr(E_{s}) \le \eps^{1/2}<1/2$ and using~\eqref{eq:kret} once again,
\[
\sum_{s=1}^{\lfloor A\varepsilon\nn^2 \rfloor} \pr(E_s)
\le 2 \pr(E_1)
= O\left(\frac{r^2t^8\log^{10}\nn}{\nn^2}\right) = O\left(\frac{r^2t^{10}\log^{10}n}{n^2}\right) = o(1).
\]
\end{proof}
We have all the ingredients we need to prove our main result.
\begin{proof}[Proof of Theorem~\ref{thm:main}]
Pick a sufficiently small constant $p_0>0$, and suppose $p$, $k$ and $r$ satisfy~\eqref{eq:pkrthm}. Define $t$ and $\varepsilon$ as in~\eqref{eq:kte}, so the conclusion of Proposition~\ref{prop:phase1} is true for the $2r$-majority model (note that $2r\le pk/9$).
Moreover, let $\varepsilon_0={p_0}^{100}$, and assume that $\varepsilon_0$ is small enough as required by Lemma~\ref{lem:nostable}. We have $\eps \le (\frac{1000}{p}\log(1/p))^{-100} \le p^{100} \le \eps_0$.
Note that our choice of $k$, $r$, $\eps$ and $t$ trivially satisfies~\eqref{eq:kret}.

Let $U\subseteq[n]^2$ be the initial set of inactive vertices, and let $U^\circ$ be the $(2k+2)$-core $U^\circ$ of the subgraph of $\sL^*(n,k,r)$ induced by $U$ (i.e.~the final set of inactive vertices of $\M_r(\sL^*(n,k,r);p)$). Let $\cR$ be the set of cells in $\cT(n,t)\simeq[ \lfloor n/t\rfloor ]^2$ that contain some vertex in $U^\circ$. Since $U^\circ$ is contained in the $(2k-r+2)$-core of the subgraph of $\sL(n,k)$ induced by $U$ (i.e.~the final set of inactive vertices of $\M_{2r}(\sL(n,k);p)$), Proposition~\ref{prop:phase1} shows that a.a.s.\ the set of cells $[\lfloor n/t\rfloor]^2\setminus\cR$ contains an $\eps$-ubiquitous $\ell_1$-component. Therefore, the $\ell_\infty$-components of $\cR$, namely $\cB_1,\ldots,\cB_s$, must satisfy properties~(iii) and~(iv) in the definition of $\eps$-ubiquitous and, by Lemma~\ref{lem:needstable}, must be a stable collection of sets of cells with respect to a random $r$-tuple $\sM$ of $k$-admissible perfect matchings of $[n]^2$. Finally, Lemma~\ref{lem:nostable} claims that a.a.s.\ there are no such stable collections, and therefore $U$ must be empty. This concludes the proof of the theorem.
\end{proof}

\end{document}